\numberwithin{equation}{section}
\newtheorem{theorem}{Theorem}
\newtheorem{lemma}[theorem]{Lemma}
\newtheorem{remark}[theorem]{Remark}
\newtheorem{example}[theorem]{Example}
\newcommand{\norm}[2]{\ensuremath{\|#2\|_{#1}}}
\newcommand{\abs}[1]{\ensuremath{|#1|}}
\newcommand{\aabs}[2]{\ensuremath{|#2|_{#1}}}
\newcommand{\nnnorm}[2]{{\left\vert\kern-0.25ex\left\vert\kern-0.25ex\left\vert 
#2 
    \right\vert\kern-0.25ex\right\vert\kern-0.25ex\right\vert_{#1}}}
\newcommand{\nnorm}[2]{{\left\vert\kern-0.25ex\left\vert\kern-0.25ex\left\vert 
#2 
    \right\vert\kern-0.25ex\right\vert\kern-0.25ex\right\vert^{2}_{#1}}}
\newcommand{\inner}[2]{\ensuremath{\langle #1,#2 \rangle}_{H}}
\newcommand{\inners}[3]{\ensuremath{\langle #1,#2 \rangle}_{#3}}
\newcommand{\duals}[4]{\ensuremath { \tensor[_{#3}]{\langle#1,#2\rangle 
}{_{#4}}  }}
\newcommand{\dual}[2]{\ensuremath { \tensor[_{V^*}]{ \langle#1,#2\rangle }{_V}  
  }}
\newcommand{\tdual}[2]{\ensuremath { \tensor[_{V}]{ \langle#1,#2\rangle
}{_{V^*}} }}
\newcommand{\bR}{{\mathbb R}}
\newcommand{\bP}{{\mathbb P}}
\newcommand{\bE}{{\mathbb E}}
\newcommand{\dd}{{\mathrm{d}}}
\newcommand{\cY}{\mathcal{Y}}
\newcommand{\ctiiXo}{\ensuremath {\mathcal{X}}_{\omega}^{h,k,q+1}}
\newcommand{\ctiiX}{\ensuremath {\mathcal{X}}^{h,k,q+1}}
\newcommand{\ctiiY}{\ensuremath {\mathcal{Y}}^{h,k,q}}
\newcommand{\cX}{\mathcal{X}}
\newcommand{\cL}{\mathcal{L}}
\newcommand{\cT}{\mathcal{T}}
\newcommand{\AM}{\ensuremath A_{\max}}
\newcommand{\Am}{\ensuremath A_{\min}}
\newcommand{\Ao}{\ensuremath A_{\omega}}
\newcommand{\RM}{\ensuremath {\rho}_{\max}}
\newcommand{\sC}{\mathscr{C}}
\newcommand{\sL}{\mathscr{L}}
\newcommand{\sB}{\mathscr{B}}
\newcommand{\sF}{\mathscr{F}}
\newcommand{\til}[1]{{\tilde #1}}
\title[Quasi-optimality in parabolic 
problems with random coefficients]{Quasi-optimality of 
Petrov-Galerkin discretizations of parabolic 
problems with random coefficients}
\author[S.~Larsson]{Stig Larsson}
\address{
  Department of Mathematical Sciences,
  Chalmers University of Technology and University of Gothenburg, 
  SE--412 96 Gothenburg,
  Sweden}
\email{stig@chalmers.se}
\author[C.~Mollet]{Christian Mollet}
\address{
  Department of Mathematics,
  University of Cologne,
  DE--50931 K\"oln,
  Germany}
\email{cmollet@math.uni-koeln.de}
\author[M.~Molteni]{Matteo Molteni}
\address{
  Department of Mathematical Sciences,
  Chalmers University of Technology and University of Gothenburg,
  SE--412 96 Gothenburg, 
  Sweden}
\email{molteni@chalmers.se}
\keywords{}
\subjclass{}
\begin{document}

\begin{abstract}
We consider a linear parabolic problem with random elliptic operator in the 
usual
Gelfand triple setting. We do not assume uniform bounds on the coercivity and 
boundedness constants, but allow them to be random variables. The parabolic 
problem is studied in a weak space-time formulation, where we can derive 
explicit formulas for the inf-sup constants. Under suitable assumptions we prove 
existence of moments of the solution. We also prove quasi-optimal error 
estimates for piecewise polynomial Petrov-Galerkin discretizations.
\end{abstract}

\date{\today}

\maketitle
 
\section{Introduction} %\label{sec:intro}
Let $T \in
(0,\infty)$ be fixed and let $(\Omega,\Sigma,\bP)$ be a complete
probability space, with normal filtration $\Sigma = (\Sigma_t)_{t \in [0,T]}$. 
We consider a linear parabolic evolution problem with random coefficients of the
form
\begin{equation} \label{eq:randHeatStrong}
\begin{aligned}
&\dot{u}(t,\omega) + A(t,\omega)u(t,\omega) = f(t,\omega) &&\mbox{in
$V^*$,}\,\, t\in(0,T],\\
&u(0,\omega) = u_0(\omega)&&\mbox{in } H,
\end{aligned}
\end{equation}
where the equations are understood $\bP$-a.s., with respect to $\omega \in 
\Omega$. We assume that $A$ is a 
random elliptic operator defined within a
Gelfand triple $V\subset H\subset V^*$, where $V$ is
continuously and densely embedded into $H$, and both are separable 
Hilbert spaces. We denote by $
\inner{\cdot}{\cdot}$ the inner product in $H$ and by
$\dual{\cdot}{\cdot}$ the dual pairing between $V$ and $V^*$, where $
\inner{u}{v} = \dual{u}{v}$ when $u \in H, v\in V$.  We assume that a 
progressively measurable map $ A \colon [0,T] 
\times \Omega \to 
\sL(V,V^*)$, is given, and that it is coercive and bounded uniformly in $[0,T]$ 
but not necessarily in $\Omega$. We denote by $a(\cdot,\cdot)$ its associated 
bilinear form,
given by
$a(t,\omega;u,v) := \dual{A(t,\omega)u}{v}$. Finally, we assume that $f$ is a 
progressively
measurable process with Bochner integrable trajectories, that the initial 
data $u_0$ is measurable, and that for fixed $\omega$ they belong to 
$L^2((0,T);V^*)$, respectively to $H$.

In \cite{LarssonMolteni} a new formulation for the linear stochastic
heat equation driven by additive noise was introduced, based on a
space-time variational formulation for its deterministic
counterpart. The choice of such a formulation is motivated by the lack
of regularity of the solution of the stochastic heat equation.  By
using this formulation $\omega$-wise it was possible to produce a
solution that exists almost surely and, with the further assumption
that the bounds on $A$ are uniform in $\Omega$, to show that the
solution is square integrable over $\Omega$.

The lack of noise in \eqref{eq:randHeatStrong} represents of course a
simplification for the application of the theory developed in
\cite{LarssonMolteni}, so that we can obtain in the same way existence,
uniqueness and a bound for the solution in terms of the data $u_0$ and $f$. The
solution obtained is then Bochner square
integrable over $\Omega$ and, in particular, the stochastic components of the 
problem can be naturally incorporated in the Hilbert spaces over which the 
problem is formulated, thus producing a weak stochastic-space-time formulation. 
However this
approach presents the drawback of having restrictive hypothesis
on $A$, namely uniform bounds $\bP$-a.s., both from below and from above, which 
are not strictly necessary.

This suggests the possibility of trying another kind of approach in which 
the bounds for
$A$ are not necessarily
uniform in $\omega$, but rather random variables, finite
and positive $\bP$-a.s. The solution
is then shown to belong to $L^p$ with respect to $\Omega$ only afterwards, 
when results of existence and 
uniqueness are already obtained $\omega$-wise. We prove a sufficient condition 
for the 
existence of 
$p$-moments of the solution in terms of moments of the random variables 
bounding $A$. The results are first 
presented under the general assumption of 
having a non-self-adjoint, time-dependent operator, and then restricted  
to operators that are self-adjoint and to operators that are 
also constant in time.

The motivation for doing this, comes from the perspective of studying the
application of multilevel Monte Carlo methods to \eqref{eq:randHeatStrong}. The 
estimates we provide are part of what is
required in order to bound the cost of the multilevel Monte Carlo estimator,
for a given tolerance. In this respect, keeping track of the constants arising
from the error estimates, plays a crucial role.

A similar analysis, for an elliptic partial differential equation with random
coefficients, can be found in \cite{Aretha, C12}, where the authors establish
bounds on the discretization error in the relevant case of non-uniformly
coercive or bounded coefficients, with respect to the random parameter. Our
goal is to provide a first step in the same direction for developing an
analogous theory in the parabolic case.

The article is structured as follows. We first state in 
Section~\ref{sec:preliminary1} the abstract form of the main theorem upon which 
we rely for proving existence and uniqueness. 
In Section~\ref{sec:formAS} we state the weak
space-time formulation and show that an almost sure solution to
\eqref{eq:randHeatStrong} exists; this is done in three stages, by 
progressively assuming more on the operator $A$ we obtain  
more detailed estimates. In Section~\ref{sec:quasi_optimality} 
we use the results from the previous section in order to derive results 
of quasi-optimality for both a spatial semidiscretization and a full discretization 
of the original problem. In Section~\ref{sec:Lpest} we prove that 
if the input data has finite moments of certain orders, then the solution also has finite
moments of some (typically) lower order. 
Moreover we show that for the semidiscrete scheme we have a 
quasi-optimal constant that depends on $\omega$ only through the
ratio between the boundedness and coercivity constants of $A$, while for the 
fully-discrete scheme we have an additional factor depending on the
boundedness constant. In both cases we extend the $\omega$-wise results of 
quasi-optimality to $L^p$ with respect to $\Omega$. Numerical experiments that support our 
theoretical findings are finally presented in Section~\ref{sec:num_res}.

%%%%%%%%%%%%%%%%%%%%%%%%%%%%%%%%%%%%%%%%%%%%%%%%%%%%%%%%%%%%%%%
%%%%%%%%%%%%%%%%%%%%%%%%%%%%%%%%%%%%%%%%%%%%%%%%%%%%%%%%%%%%%%%
%%%%%%%%%%%%%%%%%%%%%%%%%%%%%%%%%%%%%%%%%%%%%%%%%%%%%%%%%%%%%%%
%% 		C  O  M  M  E  N  T  S 
%% 		
%% Section with inf-sup theory. Quite nice as it is. I think it
%% has the right length and content, the notation is neat and 
%% consistent with what comes after and it is properly introduced
%% and concluded. No changes here probably
%% 
%%%%%%%%%%%%%%%%%%%%%%%%%%%%%%%%%%%%%%%%%%%%%%%%%%%%%%%%%%%%%%%
%%%%%%%%%%%%%%%%%%%%%%%%%%%%%%%%%%%%%%%%%%%%%%%%%%%%%%%%%%%%%%%
%%%%%%%%%%%%%%%%%%%%%%%%%%%%%%%%%%%%%%%%%%%%%%%%%%%%%%%%%%%%%%%

\section{The inf-sup theory}\label{sec:preliminary1}
The main tool upon which we base our results is 
the Banach--Ne{\v{c}}as--Babu{\v{s}}ka (BNB) theorem (see, e.g., \cite{NSV}).
\begin{theorem}\label{thm:BNB}
 Let $V$ and
$W$ be Hilbert spaces, and consider a bounded bilinear
form $\sB \colon W \times V \rightarrow \bR $, with 
\begin{equation}
\tag{BDD} \label{eq:BDD}
C_B : =  \sup_{0 \neq w \in W}\sup_{0 \neq v \in
V}\frac{\sB(w,v)}{\norm{W}{w}\norm{V}{v}} < \infty,\\
\end{equation}
and the associated bounded linear operator $B \colon W \rightarrow V^*$, i.e.,
$B\in\sL(W,V^*)$, defined by 
\begin{align*}
\duals{Bw}{v}{V^*}{V} := \sB(w,v) , \quad \forall v 
\in V, \, w
\in W.
\end{align*}
The operator $B$ is boundedly invertible if and only
if the following conditions are satisfied:
\begin{align}
\tag{BNB1} \label{eq:BNB1A}
&c_B :=  \inf_{0 \neq w \in W}\sup_{0 \neq v \in
V}\frac{\sB(w,v)}{\norm{W}{w}\norm{V}{v}} >0,\\
\tag{BNB2} \label{eq:BNB2A}
&\forall v \in V, \quad \sup_{0 \neq w \in W}\sB(w,v) >0.
\end{align} 
\end{theorem}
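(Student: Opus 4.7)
The plan is to prove the two implications of the equivalence separately. The key observation is that \eqref{eq:BNB1A} and \eqref{eq:BNB2A} encode, respectively, that $B$ is bounded below and that its (Hilbert-space) adjoint has trivial kernel; together, these two conditions are equivalent to $B$ being a bounded bijection from $W$ onto $V^*$.

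For the necessity direction, I would start from the elementary identity
\begin{equation*}
\|Bw\|_{V^*} = \sup_{0 \neq v \in V} \frac{\sB(w,v)}{\|v\|_V},
\end{equation*}
which follows at once from the definition of $B$ and of the dual norm on $V^*$. Assuming $B$ is boundedly invertible, the bound $\|w\|_W \leq \|B^{-1}\|\, \|Bw\|_{V^*}$ rearranges to \eqref{eq:BNB1A} with $c_B \geq \|B^{-1}\|^{-1}$. For \eqref{eq:BNB2A}, fix $v \in V$ with $v \neq 0$ and suppose the sup in \eqref{eq:BNB2A} fails to be strictly positive; since $\sB(\cdot, v)$ is linear in its first argument, this forces $\sB(w,v) = 0$ for every $w \in W$, i.e., $B^* v = 0$. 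But $B^*$ is boundedly invertible as the adjoint of a bounded bijection, so $v = 0$, a contradiction.

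For the sufficiency direction, I would argue in three steps. First, \eqref{eq:BNB1A} supplies the a priori bound $\|Bw\|_{V^*} \geq c_B \|w\|_W$, which immediately gives injectivity of $B$ and, via a standard Cauchy-sequence argument using the completeness of $V^*$, closedness of the range of $B$ in $V^*$. Second, to promote injective-with-closed-range to surjective, I would argue by contradiction: if the range were a proper closed subspace of $V^*$, then Hahn--Banach combined with Hilbert-space reflexivity (the identification $V^{**} \cong V$) would produce a nonzero $v \in V$ annihilating the range, i.e., $\duals{Bw}{v}{V^*}{V} = \sB(w,v) = 0$ for every $w \in W$, directly contradicting \eqref{eq:BNB2A}. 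Third, once bijectivity is established, the open mapping theorem (or, equivalently, the lower bound from the first step) yields $\|B^{-1}\|_{\sL(V^*,W)} \leq c_B^{-1}$.

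The step requiring the most care is the surjectivity argument: annihilators of subspaces of $V^*$ naturally live in the bidual $V^{**}$, so one must invoke reflexivity in order to transport the annihilating functional back into $V$ and then apply \eqref{eq:BNB2A}. In the Hilbert-space setting this is handled transparently through the Riesz representation, and no compactness or regularity subtleties intervene; the remaining arithmetic is routine, and as a by-product one obtains the quantitative bound $c_B^{-1}$ on the operator norm of $B^{-1}$, which is the form in which the theorem is typically applied in later sections.
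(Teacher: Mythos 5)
Your proposal is correct, but note that the paper itself does not prove this theorem: it is quoted as a known result with a reference to the literature (the BNB theorem, cited as \cite{NSV}), so there is no in-paper proof to compare against. Your argument is the standard closed-range proof found in those references: necessity from $\|w\|_W \le \|B^{-1}\|\,\|Bw\|_{V^*}$ together with invertibility of the adjoint, and sufficiency from the lower bound $\|Bw\|_{V^*}\ge c_B\|w\|_W$ (injectivity and closed range), plus the nondegeneracy condition \eqref{eq:BNB2A} ruling out a proper closed range via Hahn--Banach/Riesz, and finally $\|B^{-1}\|_{\sL(V^*,W)}\le c_B^{-1}$. Two minor points of care: in the closed-range step the Cauchy-sequence argument uses the completeness of $W$ (to obtain the limit $w$) together with the continuity of $B$, not the completeness of $V^*$ as you wrote; and condition \eqref{eq:BNB2A} as printed quantifies over all $v\in V$ including $v=0$, for which the supremum is $0$, so it should be read (as you implicitly do) for $v\neq 0$. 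With these readings your proof is complete, and as a by-product it gives the quantitative identity $c_B=\|B^{-1}\|^{-1}$, which is exactly the form in which the theorem is used for the explicit constants tracked later in the paper.
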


The constant $c_B$ is called the inf-sup constant and, whenever 
\eqref{eq:BNB1A}--\eqref{eq:BNB2A} 
hold, we have the equivalent condition
\begin{align}\label{eq:interchangespaces}
\inf_{0 \neq w \in W}\sup_{0 \neq v \in
V}\frac{\sB(w,v)}{\norm{W}{w}\norm{V}{v}} = \inf_{0 \neq v \in V}\sup_{0 \neq w
\in W}\frac{\sB(w,v)}{\norm{W}{w}\norm{V}{v}} >0,
\end{align}
which allows to swap the spaces where the infimum and the supremum are taken, 
see, e.g., \cite[Th. 2.2]{NSV}.

An immediate consequence of this is that the variational problem
\begin{equation*}%\label{eq:problemcontinuous}
w \in W \colon \sB(w,v) = F(v), \quad \forall v\in V, \quad F \in V^*,
\end{equation*}
i.e., $Bw=F$ in $V^*$, and its adjoint 
\begin{equation*}%\label{eq:adjointproblem}
v \in V\colon \sB(w,v) = G(w), \quad \forall w\in W, \quad G \in W^*,
\end{equation*}
i.e., $B^*v=G$ in $W^*$, are well-posed whenever \eqref{eq:BDD}, 
\eqref{eq:BNB1A} and
\eqref{eq:BNB2A} hold. In particular, the well-posedness of the former is
equivalent to the well-posedness of the latter and the norms of the solutions
are bounded, respectively, by
\begin{align*}
\norm{W}{w} \leq \frac{1}{c_B}\norm{V^*}{F}, \quad \norm{V}{v} \leq
\frac{1}{c_B}\norm{W^*}{G}.
\end{align*}
The strength of this theorem is the fact that the bounding constants for 
the norm of the 
solution are known explicitly. In particular, by combining Theorem~\ref{thm:BNB} 
with the results of quasi-optimality presented later in this 
article, we will be able to keep track of every constant arising when 
estimating the error of the numerical solution.

%%%%%%%%%%%%%%%%%%%%%%%%%%%%%%%%%%%%%%%%%%%%%%%%%%%%%%%%%%%%%%%
%%%%%%%%%%%%%%%%%%%%%%%%%%%%%%%%%%%%%%%%%%%%%%%%%%%%%%%%%%%%%%%
%%%%%%%%%%%%%%%%%%%%%%%%%%%%%%%%%%%%%%%%%%%%%%%%%%%%%%%%%%%%%%%
%%%%%%%%%%%%%%%%%%%%%%%%%%%%%%%%%%%%%%%%%%%%%%%%%%%%%%%%%%%%%%%
%%		C  O  M  M  E  N  T  S
%%		
%%		
%%%%%%%%%%%%%%%%%%%%%%%%%%%%%%%%%%%%%%%%%%%%%%%%%%%%%%%%%%%%%%%
%%%%%%%%%%%%%%%%%%%%%%%%%%%%%%%%%%%%%%%%%%%%%%%%%%%%%%%%%%%%%%%
%%%%%%%%%%%%%%%%%%%%%%%%%%%%%%%%%%%%%%%%%%%%%%%%%%%%%%%%%%%%%%%

\section{An $\omega$-wise weak space-time formulation}\label{sec:formAS}
The approach that we want to introduce relies on the one first introduced in 
\cite{LarssonMolteni} for a stochastic evolution problem. We derive the 
weak formulation in the same way as in
the deterministic case, obtaining a family of problems depending on $\omega
\in \Omega$, which can be solved $\bP$-a.s.

In this case, the problem reads exactly as its deterministic counterpart, for
which many works have been produced during last years. We refer in particular 
to
\cite{BabuskaJanik,SchwabSte,Fra},
for what is known as the {\it first formulation}, and to
\cite{CheginiStev,LarssonMolteni,Mollet,SchwabSuli,Fra,UrbanPatera},
for what is known as the {\it second (or weak) 
formulation}, which is the one that we use in the present work.

Starting from the abstract parabolic equation given in the Gelfand
triple framework presented above, we assume now that the bilinear form 
%$a(\cdot,\cdot)$,defined as above,
satisfies the following conditions for some 
{\rm random
variables} $\Am=\Am(\omega)$ and $\AM=\AM(\omega)$, positive and finite, 
$\bP$-a.s.:
\begin{alignat}{4}
& \abs{a(t,\omega;u,v)} \leq \AM(\omega) \norm{V}{u}
\norm{V}{v},\quad
&& t
\in [0,T],&& \, u,v\in V, \\
& a(t,\omega;v,v) \geq \Am(\omega) \norm{V}{v}^2, \quad 
&& t \in
[0,T],
&&\, v\in V.
\end{alignat}
%We assume that $\AM$ and $\Am$ are positive and finite, $\bP$-a.s.
We make use 
of the following Lebesgue-Bochner spaces
\begin{align*}
\cY &:= L^2((0,T);V), \\% &&\cY_{H}:= \cY \times H,\\
\cX &:= L^2((0,T);V) \cap H^1((0,T);V^*),  &&\cX_{0,\{T\}}:=\{x\in\cX:x(T)=0\},
\end{align*}
normed by
\begin{align*}
\norm{\cY}{y}^2 &:= \norm{L^2((0,T);V)}{y}^2, \\
\norm{\cX}{x}^2 &:= \norm{H}{x(0)}^2 + \norm{L^2((0,T);V)}{x}^2 + 
\norm{L^2((0,T);V^*)}{\dot{x}}^2.
\end{align*}
The space $\cX_{0,\{T\}}$ is densely embedded in $ \sC([0,T];H)$, 
(\cite[Theorem 
1, 
Chapter 
XVIII.1]{DauLio}), meaning that pointwise values of $x$ are well defined, in 
particular, $x(0)$ in \eqref{eq:loadfunc}. 
We define a parametrized bilinear form, given by 
\begin{equation}\label{eq:bilinear_form}
\begin{aligned}
&\sB_{\omega}^* \colon \cY \times \cX_{0,\{T\}} \rightarrow \bR,\\
&\sB_{\omega}^*(y,x) := \int_0^T{\Big( \tdual{y(t)}{-\dot{x}(t)} 
+a(t,\omega;y(t),x(t))
\Big)}\,\dd t,
\end{aligned}
\end{equation}
and a parametrized load functional, given by
\begin{equation}\label{eq:loadfunc}
\begin{aligned}
&\sF_{\omega} \colon \cX_{0,\{T\}} \rightarrow \bR,\\
&\sF_{\omega}(x) := \int_0^T {\dual{f(t,\omega)}{x(t)}}\,\dd 
t +
\inner{u_0(\omega)}{x(0)}.
\end{aligned}
\end{equation}

With this notation, we obtain that
the weak space-time formulation of the original problem 
\eqref{eq:randHeatStrong} is:
\begin{equation} \label{eq:secondspacetime}
 u = u(\cdot,\omega) \in \cY: \sB_{\omega}^*(u,x) = \sF_{\omega}(x), \quad 
\forall x \in
\cX_{0,\{T\}}, \quad {\rm a.e. } ~ \omega \in \Omega.
\end{equation}
We present three different proofs of the validity of the conditions expressed in 
Theorem~\ref{thm:BNB}; by progressively assuming more on the operator $A$, we 
provide sharper estimates for the norm of the solution and bounds in more 
general norms.

\subsection{The inf-sup theorem: general case}\label{subsec:inf_sup_gen}

The inf-sup constant, the value of which is not very relevant in
the deterministic framework, has an important role in our work. Indeed, since 
we do no 
longer assume
uniform boundedness in $\Omega$, but only almost sure finiteness, the existence
of $p$-moments for the solution to \eqref{eq:secondspacetime} depends on the
existence of higher moments for $\AM$, $\Am^{-1}=\frac{1}{\Am}$, $u_0$ and $f$. 
Therefore having the sharpest possible bound is
crucial. We introduce the following notation:
\begin{align*}
\rho(\omega) := \frac{\AM(\omega)}{\Am(\omega)}, 
\end{align*}
the role of which will be more evident in 
Sections~\ref{sec:quasi_optimality}--\ref{sec:Lpest}.

%%%%%%%%%%%%%%%%%%%%%%%%%%%%%%%%%%%%%%%%%%%%%%%%%%
%%%%%%%%%%%%%%%%%%%%%%%%%%%%%%%%%%%%%%%%%%%%%%%%%%
%%%%%%%%%%%%%%%%%%%%%%%%%%%%%%%%%%%%%%%%%%%%%%%%%%
%%
%%	B N B     T H E O R E M
%%
%%%%%%%%%%%%%%%%%%%%%%%%%%%%%%%%%%%%%%%%%%%%%%%%%%
%%%%%%%%%%%%%%%%%%%%%%%%%%%%%%%%%%%%%%%%%%%%%%%%%%
%%%%%%%%%%%%%%%%%%%%%%%%%%%%%%%%%%%%%%%%%%%%%%%%%%

Typical estimates for $c_B$ and $C_B$ appearing in Theorem 
\ref{thm:BNB}, adapted to our framework, are given by the following theorem 
(see \cite{SchwabSte} and \cite{Fra}):
\begin{theorem}\label{thm:bnbas}
The bilinear form defined in \eqref{eq:bilinear_form}
satisfies the following:
\begin{align}
&C_B(\omega):=\sup_{0\neq y \in \cY }\sup_{0\neq x \in \cX_{0,\{T\}}} 
\frac{\sB_{\omega}^*(y,x)}{ \norm{\cY}{y} \norm{\cX}{x} } \leq 
\sqrt{2}\max\{1,\AM(\omega)\}, \quad 
\bP\mbox{\rm{-a.s.}}, 
\label{eq:bdd1}\\
&c_B(\omega) :=\inf_{0\neq y \in \cY }\sup_{0\neq x \in \cX_{0,\{T\}}} 
\frac{\sB_{\omega}^*(y,x)}{ \norm{\cY}{y} \norm{\cX}{x}} \geq 
\frac{\min\{\Am(\omega), 
\rho^{-1}(\omega) \}}{\sqrt{2}}, \quad 
\bP\mbox{\rm{-a.s.}},\label{eq:infsup1}\\
&\forall x\in \cX_{0,\{T\}} \qquad \sup_{0\neq y \in 
\cY} \sB_{\omega}^*(y,x)>0, \quad 
\bP\mbox{\rm{-a.s.}}\label{eq:infsup12}
\end{align}
\end{theorem}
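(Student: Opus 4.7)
I would tackle the three claims separately. The boundedness \eqref{eq:bdd1} is immediate from Cauchy--Schwarz applied to each summand of $\sB^*_\omega(y,x)$: the first is bounded by $\norm{\cY}{y}\,\norm{L^2((0,T);V^*)}{\dot x}$ and the second by $\AM(\omega)\,\norm{\cY}{y}\,\norm{L^2((0,T);V)}{x}$. A second Cauchy--Schwarz in $\bR^2$ combined with the definition of $\norm{\cX}{\cdot}$ yields $|\sB^*_\omega(y,x)| \leq \sqrt{1+\AM(\omega)^2}\,\norm{\cY}{y}\,\norm{\cX}{x}$; the elementary estimate $\sqrt{1+\AM^2}\leq\sqrt{2}\max\{1,\AM\}$ finishes this part.

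The core of the argument is the inf-sup estimate \eqref{eq:infsup1}. For each fixed $\omega$ and each $0\neq y\in\cY$, I would construct an explicit supremizer $x_y\in\cX_{0,\{T\}}$ as the solution of the backward parabolic Cauchy problem $-\dot x + A^*(t,\omega)\,x = g(y)$ in $V^*$ with $x(T)=0$, for a suitably chosen forcing $g(y)\in V^*$. The preferred choice is the Riesz lift of $y$ from $V$ into $V^*$: with this choice, using the identity $\tdual{y}{A^*x}=a(t,\omega;y,x)$, the cross terms in $\sB^*_\omega(y,x_y)$ cancel and we are left with $\sB^*_\omega(y,x_y)=\norm{\cY}{y}^2$. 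Classical energy estimates on the backward problem (test the equation against $x_y$, use coercivity and Young's inequality) will give $\norm{H}{x_y(0)}^2 + \Am\,\norm{L^2((0,T);V)}{x_y}^2 \leq \Am^{-1}\norm{\cY}{y}^2$, while the equation itself produces $\norm{L^2((0,T);V^*)}{\dot x_y} \leq \AM\,\norm{L^2((0,T);V)}{x_y} + \norm{\cY}{y}$. Summing the three contributions to $\norm{\cX}{x_y}^2$ and distinguishing the regimes $\AM(\omega)\leq 1$ and $\AM(\omega)>1$ collapses the resulting bound into the required $\min\{\Am,\rho^{-1}\}/\sqrt{2}$ for the ratio $\sB^*_\omega(y,x_y)/(\norm{\cY}{y}\,\norm{\cX}{x_y})$.

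For \eqref{eq:infsup12}, I would rewrite the bilinear form as $\sB^*_\omega(y,x)=\int_0^T \tdual{y}{-\dot x + A^*(t,\omega)x}\,\dd t$; vanishing of the supremum for some $x$ forces $-\dot x + A^*(t,\omega)\,x=0$ in $L^2((0,T);V^*)$. Together with the terminal condition $x(T)=0$, testing the equation against $x$ itself gives $\tfrac{1}{2}\tfrac{\dd}{\dd t}\norm{H}{x}^2 = a(t,\omega;x,x)\geq \Am\norm{V}{x}^2\geq 0$, so $\norm{H}{x(\cdot)}^2$ is non-decreasing and vanishes at $T$, hence $x\equiv 0$. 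The main obstacle lies in the inf-sup step: the forcing $g(y)$ must simultaneously produce a clean positive multiple of $\norm{\cY}{y}^2$ in $\sB^*_\omega(y,x_y)$ and give $\omega$-dependence of $\norm{\cX}{x_y}$ that is optimal in both regimes $\AM\leq 1$ and $\AM>1$. A naive alternative such as $g(y)=A(t,\omega)y$ would yield $\sB^*_\omega(y,x_y)\geq \Am\norm{\cY}{y}^2$, but the corresponding energy estimate only bounds $\norm{\cX}{x_y}$ by a multiple of $\AM\rho\,\norm{\cY}{y}$, producing the suboptimal bound $c_B\gtrsim \rho^{-2}$; the Riesz choice is what is needed to reach the sharper $\min\{\Am,\rho^{-1}\}/\sqrt{2}$.
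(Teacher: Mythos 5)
The paper does not actually prove Theorem~\ref{thm:bnbas} — it is quoted from \cite{SchwabSte} and \cite{Fra} — so your attempt has to be judged on its own. Your treatment of \eqref{eq:bdd1} is correct, and your argument for \eqref{eq:infsup12} (if $\sup_y\sB_\omega^*(y,x)\le 0$ then by linearity $-\dot x+A^*(t,\omega)x=0$ in $L^2((0,T);V^*)$, whence $\tfrac{\dd}{\dd t}\norm{H}{x(t)}^2=2a(t,\omega;x,x)\ge 0$ and $x(T)=0$ force $x\equiv 0$) is fine, modulo stating that the backward adjoint problem you use elsewhere is well-posed by Lions' theorem, since $a(t,\omega;\cdot,\cdot)$ and its adjoint share the same constants.

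The gap is in the inf-sup step: your construction (supremizer $x_y$ solving $-\dot x+A^*x=R_Vy$, $x(T)=0$, so that $\sB_\omega^*(y,x_y)=\norm{\cY}{y}^2$) is the standard and reasonable one, but the estimates you list do not ``collapse'' to the constant $\min\{\Am,\rho^{-1}\}/\sqrt2$, and no case distinction $\AM\lessgtr 1$ can fix this. Writing $Y=\norm{\cY}{y}$, $s=\norm{L^2((0,T);V)}{x_y}$, $h=\norm{H}{x_y(0)}$, $d=\norm{L^2((0,T);V^*)}{\dot x_y}$, your bounds are exactly $\tfrac12 h^2+\Am s^2\le Ys$ and $d\le \AM s+Y$, and the best conclusion they allow is
\begin{equation}
\norm{\cX}{x_y}^2=h^2+s^2+d^2\;\le\;\bigl(\Am^{-2}+(1+\rho)^2\bigr)\,Y^2 ,
\end{equation}
i.e.\ $c_B\ge\bigl(\Am^{-2}+(1+\rho)^2\bigr)^{-1/2}$. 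This has the right $\omega$-dependence (it is $\ge \min\{\Am,\rho^{-1}\}/\sqrt5$, since $\rho\ge1$), but it is strictly weaker than \eqref{eq:infsup1}: for $\Am=\AM=1$ your estimates give $1/\sqrt5\approx0.45$, whereas the theorem asserts $1/\sqrt2\approx0.71$ (and the sharp value in that case is $1$, by the exact identity $\int_0^T\norm{V^*}{-\dot x+Rx}^2\,\dd t=\norm{\cX}{x}^2$ behind Theorem~\ref{thm:bnbthm}). Rescaling the forcing $g(y)=\lambda R_Vy$ changes nothing, since everything scales linearly in $\lambda$; recovering the stated $\sqrt2$ requires a genuinely sharper computation (of the identity type used in the self-adjoint weighted-norm proof), not the triangle/Young estimates you invoke. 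So as written you prove \eqref{eq:infsup1} with $\sqrt2$ replaced by a larger absolute constant — which would in fact suffice for the later sections, where only the $\omega$-dependence through $\Am^{-1}$ and $\rho$ matters — but you do not prove the theorem with the constant claimed.
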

This shows that the operator $B_{\omega} \in \cL(\cY,\cX_{0,\{T\}}^*)$ 
associated 
with $
\sB_{\omega}^*(\cdot,\cdot)$ via 
\begin{equation}\label{eq:DefB}
 \sB_{\omega}^*(y,x) = \duals{B_{\omega}y}{x}{\cX_{0,\{T\}}^*}{\cX_{0,\{T\}}}
 \end{equation}
is boundedly
invertible $\bP$-a.s. Moreover, for $ f(\cdot,\omega) \in L^2((0,T);V^*)$
and $ u_0(\omega) \in H$, we have $\sF_{\omega} \in \cX_{0,\{T\}}^*$, hence, 
\eqref{eq:secondspacetime} is
well-posed $\bP$-a.s. and admits a unique solution $u(\cdot,\omega) \in \cY$,
such that
\begin{equation*}
\norm{\cY}{u(\cdot,\omega)} \leq c_B^{-1}(\omega)  \norm{\cX^*}{\sF_{\omega}}  
=c_B^{-1} (\omega)
(\norm{\cY^*}{f(\cdot,\omega)}^2 +\norm{H}{u_0(\omega)}^2)^{\frac12}, 
\quad 
\bP\mbox{\rm{-a.s.}},
\end{equation*}
or, more explicitly:
\begin{equation}\label{eq:normboundAS_nt}
\norm{\cY}{u(\cdot,\omega)} \leq \sqrt{2}\, \max\{ \Am^{-1}(\omega), 
\rho(\omega) \} (\norm{\cY^*}{f(\cdot,\omega)}^2 
+
\norm{H}{u_0(\omega)}^2)^{\frac12}, \, \bP\mbox{\rm{-a.s.}}
\end{equation}

This result is however not completely satisfactory, since the 
bound for the norm of the solution expressed in \eqref{eq:normboundAS_nt} 
involves both $\Am$ and $\AM$, where the latter appears as part of the 
quotient defining $\rho$.
Despite the fact that in order to build our error analysis we will 
need to assume that $\rho$ is uniformly bounded, as expressed in 
\eqref{eq:assumption_ratio}, in the 
next two theorems we show that when $A$ is self-adjoint, the 
presence of $\AM$ and $\rho$ can be avoided.

\subsection{The inf-sup theorem: self-adjoint 
case}\label{subsec:inf_sup_alt}
We assume now that $A$ is also self-adjoint, possibly still time-dependent.

The next proof is of particular relevance in this framework, since we 
want to avoid having any unnecessary constant appearing in our estimates. To do 
so, we ``hide'' the constants, for each $\omega$, inside the norms we use. This 
is done in the spirit of the results presented in \cite{LarssonMolteni2}, to 
which we refer for the missing details.

We consider $\omega$ to be fixed, and use the compact notation $A_{\omega}(t)$ 
for the operator 
appearing in \eqref{eq:randHeatStrong}. In virtue of the properties of 
$\Ao(t)$, fractional powers are indeed 
well 
defined and the 
norms of $V$ and $V^*$ are equivalent to $\norm{H}{\Ao^{\frac12}(t) \cdot}$, 
respectively $\norm{H}{\Ao^{-\frac12}(t) \cdot}$.
We therefore introduce 
accordingly two equivalent norms on $\cX_{0,\{T\}}$ and $\cY$ as follows:
\begin{equation}\label{eq:tilde_norms}
\begin{aligned}
\aabs{\cX_{\omega}}{x}^2 &:= \norm{H}{x(0)}^2 + \int_0^T{ \big(\norm{H}{ 
\Ao^{\frac12}(t) x(t)}^2 
+ 
\norm{H}{\Ao^{-\frac12}(t) \dot{x}(t)}^2\big)}\,\dd t,\\
\aabs{\cY_{\omega}}{y}^2 &:=  \int_0^T{ \norm{H}{ \Ao^{\frac12} (t)
y(t)}^2}\,\dd t.
\end{aligned}
\end{equation}

%%%%%%%%%%%%%%%%%%%%%%%%%%%%%%%%%%%%%%%%%%%%%%%%%%%%%%%%%%%%%%%%%%%%%%%%%%
%%									%%
%%		consequences OF THE BNB THEOREM				%%	
%%									%%
%%%%%%%%%%%%%%%%%%%%%%%%%%%%%%%%%%%%%%%%%%%%%%%%%%%%%%%%%%%%%%%%%%%%%%%%%%
\begin{theorem}\label{thm:bnbthm}
If $A_{\omega}(t)$ is self-adjoint, then the bilinear form in 
\eqref{eq:bilinear_form} is such 
that:
\begin{align}
&C_B:=\sup_{0\neq y \in \cY }\sup_{0\neq x \in \cX_{0,\{T\}}} 
\frac{\sB_{\omega}^*(y,x)}{ \aabs{\cY_{\omega}}{y} \aabs{\cX_{\omega}}{x}} = 
1,\quad 
\bP\mbox{\rm{-a.s.}}, \label{eq:bdd}\\
&c_B:=\inf_{0\neq y \in \cY }\sup_{0\neq x \in \cX_{0,\{T\}}} 
\frac{\sB_{\omega}^*(y,x)}{ \aabs{\cY_{\omega}}{y} \aabs{\cX_{\omega}}{x}} = 
1,\quad 
\bP\mbox{\rm{-a.s.}},\label{eq:infsup}\\
&\forall x\in \cX_{0,\{T\}} \qquad \sup_{0\neq y \in 
\cY} \sB_{\omega}^*(y,x)>0, \quad 
\bP\mbox{\rm{-a.s.}}\label{eq:infsup2}
\end{align}
\end{theorem}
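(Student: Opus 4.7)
The plan is to exploit the self-adjointness of $A_\omega(t)$ to rewrite the bilinear form in terms of the functional calculus for $A_\omega(t)$, which turns the weighted norms $\aabs{\cY_\omega}{\cdot}$ and $\aabs{\cX_\omega}{\cdot}$ into plain Bochner $L^2$-norms in $H$. Concretely, for sufficiently regular $y,x$ (and then by density) one has $a(t,\omega;y(t),x(t)) = \inner{A_\omega^{1/2}(t) y(t)}{A_\omega^{1/2}(t) x(t)}$ and $\tdual{y(t)}{-\dot{x}(t)} = -\inner{A_\omega^{1/2}(t) y(t)}{A_\omega^{-1/2}(t) \dot{x}(t)}$, so that the bilinear form becomes
\begin{equation*}
\sB_\omega^*(y,x) = \int_0^T \inner{A_\omega^{1/2}(t) y(t)}{A_\omega^{1/2}(t) x(t) - A_\omega^{-1/2}(t) \dot{x}(t)} \, \dd t.
\end{equation*}

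For the boundedness \eqref{eq:bdd}, I would apply Cauchy--Schwarz in $H$ pointwise in $t$ and then in $L^2(0,T)$. The first factor is exactly $\aabs{\cY_\omega}{y}$. Expanding the square of the second factor produces $\|A_\omega^{1/2} x\|_H^2 + \|A_\omega^{-1/2}\dot{x}\|_H^2$ plus a cross term equal to $-2\int_0^T \tdual{\dot{x}(t)}{x(t)} \, \dd t$, which by the standard Gelfand-triple integration-by-parts formula equals $\|x(0)\|_H^2 - \|x(T)\|_H^2 = \|x(0)\|_H^2$ since $x \in \cXo$. This exactly reconstructs $\aabs{\cX_\omega}{x}^2$, giving $C_B \leq 1$.

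For the inf-sup bound \eqref{eq:infsup}, I would construct the supremizer that attains equality in the above Cauchy--Schwarz step. Given $y \in \cY$, I would define $x$ as the solution of the backward adjoint problem
\begin{equation*}
-\dot{x}(t) + A_\omega(t) x(t) = A_\omega(t) y(t), \quad t \in [0,T), \qquad x(T) = 0,
\end{equation*}
which, after time reversal, is a forward parabolic problem with coercive operator and hence is well-posed in $\cXo$ by Theorem~\ref{thm:bnbas} applied to the adjoint. By construction $A_\omega^{1/2} x - A_\omega^{-1/2}\dot{x} = A_\omega^{1/2} y$, so $\sB_\omega^*(y,x) = \aabs{\cY_\omega}{y}^2$, and the same cross-term identity as above gives $\aabs{\cX_\omega}{x} = \aabs{\cY_\omega}{y}$. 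Hence the supremum ratio equals $1$ for every $y$, so $c_B \geq 1$, and combined with $c_B \leq C_B \leq 1$ we obtain $c_B = C_B = 1$. For \eqref{eq:infsup2} it suffices to choose $y = x$: then $\sB_\omega^*(x,x) = \tfrac{1}{2}\|x(0)\|_H^2 + \int_0^T \|A_\omega^{1/2}(t) x(t)\|_H^2 \, \dd t$, which is strictly positive whenever $x \neq 0$ in $\cXo$.

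The main obstacle will be the rigorous handling of the time-dependence of the self-adjoint operator $A_\omega(t)$. One must justify the commutation/cancellation $A_\omega^{-1/2}(t) A_\omega^{1/2}(t) = I$ inside the integral identity $\int_0^T \inner{A_\omega^{-1/2}(t) \dot{x}(t)}{A_\omega^{1/2}(t) x(t)} \, \dd t = \int_0^T \tdual{\dot{x}(t)}{x(t)} \, \dd t$ for $x \in \cXo$, and also the well-posedness of the supremizer equation with time-dependent coefficients. Both are handled by first working with smooth test functions in $t$ (where $A_\omega^{1/2}(t)$ can be applied pointwise without ambiguity) and then extending by density, relying on the uniform-in-$t$ coercivity and boundedness of $A_\omega(t)$ for the fixed $\omega$; the referenced treatment in \cite{LarssonMolteni2} supplies the technical details.
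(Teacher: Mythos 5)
Your argument is correct and delivers the stated constants, but it follows a genuinely different route from the paper's (whose proof is modelled on \cite[Theorem~3]{LarssonMolteni2}; the in-paper analogue is the proof of Theorem~\ref{thm:reg_prop2}). The continuity half coincides: both exploit the pointwise-in-$t$ identity $\inner{\Ao^{1/2}(t)v}{\Ao^{-1/2}(t)g}=\tdual{v}{g}$ (so no commutation of operators at different times is ever needed, only the standard integration by parts on $\cXo$, which the paper has via the embedding into $\sC([0,T];H)$) together with the cross term $\norm{H}{x(0)}^2$, showing that the second Cauchy--Schwarz factor is exactly $\aabs{\cX_{\omega}}{x}$. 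For the inf-sup and nondegeneracy, the paper argues with interchanged trial and test spaces: for a given $x\in\cXo$ it takes the explicit supremizer $y_x$ (the Riesz-type image of $A_\omega x-\dot x$, here simply $y_x=x-\Ao^{-1}\dot x\in\cY$), proves the interchanged surjectivity by a duality/regularity argument (assume $\sB_{\omega}^*(y^*,\cdot)\equiv 0$, upgrade the regularity of $y^*$, integrate by parts, time-reverse, conclude $y^*=0$), and then swaps the spaces via \eqref{eq:interchangespaces}. You instead construct, for each $y\in\cY$, the optimal test function by solving $-\dot x+\Ao x=\Ao y$, $x(T)=0$, which gives the ratio exactly $1$ in the stated order and reduces \eqref{eq:infsup2} to the trivial choice $y=x$; the gain is that you avoid the duality argument and the swap, the price is that you must import solvability of the backward parabolic problem, and this is the one point that is under-justified as written: Theorem~\ref{thm:bnbas} (after time reversal) produces a solution only in $\cY$, so you should add the short bootstrap that this weak space-time solution satisfies $\dot x=\Ao(x-y)$ distributionally, hence $\dot x\in L^2((0,T);V^*)$, $x\in\cXo$ and the end condition is attained (equivalently, cite Lions' classical well-posedness theorem directly). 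With that remark supplied, your proof is complete and yields $c_B\geq 1$, $C_B\leq 1$, hence $c_B=C_B=1$ exactly as in the paper.
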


%%%%%%%%%%%%%%%%%%%%%%%%%%%%%%%%%%%%%%%%%%%%%%%%%%%%%%%%%%%%%%%%%%%%%%
%%%%%%%%%%%%%%%%%%%%%%%%%%%%%%%%%%%%%%%%%%%%%%%%%%%%%%%%%%%%%%%%%%%%%%
%%%%%%%%%%%%%%%%%%%%%%%%%%%%%%%%%%%%%%%%%%%%%%%%%%%%%%%%%%%%%%%%%%%%%%
%%	 R H S 
%%%%%%%%%%%%%%%%%%%%%%%%%%%%%%%%%%%%%%%%%%%%%%%%%%%%%%%%%%%%%%%%%%%%%%
%%%%%%%%%%%%%%%%%%%%%%%%%%%%%%%%%%%%%%%%%%%%%%%%%%%%%%%%%%%%%%%%%%%%%%
%%%%%%%%%%%%%%%%%%%%%%%%%%%%%%%%%%%%%%%%%%%%%%%%%%%%%%%%%%%%%%%%%%%%%%
The proof of this is based on \cite[Theorem~3]{LarssonMolteni2}. 
Furthermore, we can estimate the right-hand side in \eqref{eq:loadfunc} as 
follows:
\begin{align}\label{eq:norm_rhs}
\norm{(\cX, \aabs{\cX_{\omega}}{\cdot})^*}{\sF_{\omega}} \leq \Big[ 
\int_0^T{ \norm{H}{\Ao^{-\frac12}(t)f(t,\omega)}^2 
}\,\dd t + \norm{H}{u_0(\omega)}^2 \Big]^{\frac12},\quad 
\bP\mbox{\rm{-a.s.}}
\end{align}
By combining this version of the inf-sup theorem with the estimate in 
\eqref{eq:norm_rhs}, we can thus achieve 
the sharper estimate
\begin{align*}
\int_0^T{ \norm{H}{\Ao^{\frac12}(t) u(t,\omega)}^2 }\,\dd t \leq 
\int_0^T{ \norm{H}{\Ao^{-\frac12}(t) f(t,\omega)}^2 }\,\dd t + 
\norm{H}{u_0(\omega)}^2,\quad 
\bP\mbox{\rm{-a.s.}}
\end{align*}
Hence, by using the 
equivalence between the norms, we obtain that:
\begin{align}\label{eq:bound_norm_sol_V}
\Am(\omega) \int_0^T{ \norm{V}{u(t,\omega)}^2 }\,\dd t \leq  
\Am^{-1}(\omega) \int_0^T{ \norm{V^*}{f(t,\omega)}^2 }\,\dd t + 
\norm{H}{u_0(\omega)}^2,\quad 
\bP\mbox{\rm{-a.s.}}
\end{align}
Hiding the $\omega$-dependence for better readability, we conclude
\begin{equation}\label{eq:normboundAS_nt2}
\norm{ L^2((0,T);V) }{u}^2 \leq  \Am^{-2} \norm{L^2((0,T);V^*) }{f}^2 
+ \Am^{-1} \norm{H}{u_0}^2, \quad \bP\mbox{\rm{-a.s.}}
\end{equation}

The importance of this proof is that the $\omega$ dependence is transferred to 
the spaces on which we solve the problem, so that $C_B$ and $c_B$ are now 
constants and not random variables. This fact will be particularly useful in 
the derivation of results of quasi-optimality, and in Section~\ref{sec:Lpest} 
when integrating over $\Omega$ under relatively weak assumptions
on the data $\AM$, $\Am$, $u_0$ and $f$.

Finally, we notice that the expression in \eqref{eq:normboundAS_nt2} 
is not in the form in which norm bounds derived from the inf-sup theory 
are usually stated, which would be:
\begin{equation}\label{eq:normboundAS_nt3}
\norm{L^2((0,T);V) }{u}^2 \leq  \max\big\{ \Am^{-2}, \Am^{-1} \big\} \big( 
\norm{L^2((0,T);V^*) }{f}^2 
+   \norm{H}{u_0}^2 \big), \quad \bP\mbox{\rm{-a.s.}}
\end{equation}

%%%%%%%%%%%%%%%%%%%%%%%%%%%%%%%%%%%%%%%%%%%%%%%%%%%%%%%%%%%%%%%%%%
%%%%%%%%%%%%%%%%%%%%%%%%%%%%%%%%%%%%%%%%%%%%%%%%%%%%%%%%%%%%%%%%%%
%%
%%
%%	c h r i s t i a n / inf sup
%%
%%
%%%%%%%%%%%%%%%%%%%%%%%%%%%%%%%%%%%%%%%%%%%%%%%%%%%%%%%%%%%%%%%%%%
%%%%%%%%%%%%%%%%%%%%%%%%%%%%%%%%%%%%%%%%%%%%%%%%%%%%%%%%%%%%%%%%%%

\subsection{The inf-sup theorem: further spatial 
regularity}\label{subsec:further_spatial}
In the case of an operator $A$ that does not depend on time, it is 
possible to prove further results of spatial regularity. Let $W_+ 
\hookrightarrow  W_- $ 
be Hilbert spaces with scalar products 
$  \inners{\cdot}{\cdot}{W_+} $ and 
$\inners{\cdot}{\cdot}{W_-}$, respectively, and
$W_0:=[W_-,W_+]_{1/2}$ the interpolation space with scalar
product $\inners{\cdot}{\cdot}{W_0}$.
We assume that the operator $A$ is time-independent and self-adjoint with
\begin{equation*}\label{eq:bounds_for_A_shifted}
A_{\rm min} (\omega)\le \|A_\omega\|_{\sL(W_+, W_-)} \le A_{\rm max}(\omega),
\end{equation*}
for random variables $0<\Am\le\AM<\infty$, $\bP${-a.s.}

We introduce the shifted trial and test spaces
\begin{align*}
 \til{\cY}&:=L^2((0,T);W_{-}^*),\\
\til\cX&:=L_2((0,T);W_+)\cap H^1((0,T);W_{-}), \quad
\til\cX_{0,\{T\}}:=\{x\in \til{\cX} : x(T)=0\},
\end{align*}
equipped with norm
\begin{align*}
\|y\|_{\til{\cY}}^2:=\int_0^T \|y(t)\|_{W_-^*}^2\,\dd t,
\end{align*}
as well as the $\omega$-dependent norm
\begin{align*}
|x|_{\til{\cX}_{\omega}}^2:=\int_0^T (\|\dot{x}(t)\|_{W_-}^2 + 
\|A_{\omega}x(t)\|_{W_-}^2)\,\dd t
+ \|A_\omega^{\frac12}x(0)\|_{W_-}^2.
\end{align*}
The following 
proposition gives an inf-sup constant in a more general framework.
% % % % % % %
\begin{theorem}\label{thm:reg_prop2}
The operator ${B_\omega} \colon {\til{ \cY}} \to {\til{ \cX}_{0,\{T\}}}^*$ 
defined by 
\eqref{eq:DefB} is boundedly invertible with
\begin{equation*} %\label{thm:reg_prop2_supsup}
\sup_{0 \neq v \in {{\til\cY}} }
\sup_{0 \neq w\in {{\til\cX}_{0,\{T\}}} } 
\frac{|\langle {B_\omega}v , w\rangle|}{\|v\|_{{{\til\cY}}} |w|_{{\til\cX}_\omega}} =
1,  \quad \bP\mbox{\rm{-a.s.}},
\end{equation*}
as well as
\begin{equation}\label{eq:reg_prop2_infsup}
\inf_{0 \neq v \in {{\til\cY}} }
\sup_{0 \neq w\in {{\til\cX}_{0,\{T\}}} } 
\frac{|\langle {B_\omega}v , w\rangle|}{\|v\|_{{\til{\cY}}} |w|_{\til{\cX}_\omega}} = 
1,  \quad \bP\mbox{\rm{-a.s.}}
\end{equation}
\end{theorem}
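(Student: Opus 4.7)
The approach mirrors the proof of Theorem~\ref{thm:bnbthm}, but exploits the additional spatial regularity by measuring the parabolic residual in the $W_-$-norm rather than via fractional powers of $A_\omega$ acting on the pivot space. The first step is to rewrite the bilinear form using self-adjointness of $A_\omega$ and the $W_-^*,W_-$ duality as
\[
\sB_\omega^*(v,w) = \int_0^T \langle v(t),\, -\dot w(t) + A_\omega w(t)\rangle_{W_-^*,W_-}\,\dd t,
\]
which reduces the analysis to the Bochner duality $(L^2((0,T);W_-^*))^* = L^2((0,T);W_-)$.

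The central identity to be proved is
\[
\int_0^T \|-\dot w(t) + A_\omega w(t)\|_{W_-}^2\,\dd t = |w|_{\til{\cX}_\omega}^2, \qquad w\in\til{\cX}_{0,\{T\}}.
\]
Expanding the square produces the two squared terms already appearing in $|w|_{\til{\cX}_\omega}^2$, plus a cross term $-2\int_0^T \langle \dot w, A_\omega w\rangle_{W_-}\,\dd t$. Interpreting $A_\omega$ as an unbounded positive self-adjoint operator on $W_-$ with form domain $W_0$, the product rule $\frac{\dd}{\dd t}\|A_\omega^{1/2} w\|_{W_-}^2 = 2\langle \dot w, A_\omega w\rangle_{W_-}$ combined with the endpoint condition $w(T)=0$ collapses this cross term to $\|A_\omega^{1/2} w(0)\|_{W_-}^2$, which matches the boundary contribution in $|w|_{\til{\cX}_\omega}^2$ exactly. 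Once the identity is in hand, Bochner duality yields
\[
\sup_{0\neq v\in\til{\cY}}\frac{\sB_\omega^*(v,w)}{\|v\|_{\til{\cY}}} = \|-\dot w + A_\omega w\|_{L^2((0,T);W_-)} = |w|_{\til{\cX}_\omega},
\]
so the double supremum equals $1$.

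For the inf-sup \eqref{eq:reg_prop2_infsup} the plan is to fix $v$ and maximize over $w$. For this direction one needs surjectivity of the map $w\mapsto -\dot w + A_\omega w$ from $\til{\cX}_{0,\{T\}}$ onto $L^2((0,T);W_-)$, which follows via the time reversal $s=T-t$: the backward problem with final value $w(T)=0$ becomes a forward Cauchy problem with zero initial data, well-posed in the shifted spaces thanks to the inf-sup result of Subsection~\ref{subsec:inf_sup_alt} applied to the triple $(W_+,W_0,W_-)$. Choosing $w$ so that $-\dot w + A_\omega w$ approximates the Riesz representative of $v$ in $L^2(W_-)$ drives the ratio to $1$, while non-degeneracy of the bilinear form is automatic from the central identity. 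The main technical obstacle I anticipate is the rigorous justification of the integration by parts in the cross term, since $A_\omega$ is a priori only bounded $W_+\to W_-$; a density argument approximating $w\in\til{\cX}_{0,\{T\}}$ by smooth $W_+$-valued functions vanishing at $T$, together with continuity of both sides of the identity in the $|\cdot|_{\til{\cX}_\omega}$-norm, should close this gap.
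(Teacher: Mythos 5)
Your proposal is correct, and its first half coincides with the paper's argument: the identity $\int_0^T\|{-\dot w}+A_\omega w\|_{W_-}^2\,\dd t=|w|_{\til{\cX}_\omega}^2$, with the cross term collapsing to $\|A_\omega^{1/2}w(0)\|_{W_-}^2$ thanks to $w(T)=0$, is exactly the paper's computation, and your Bochner-duality step is the same device as the paper's explicit choice $y_x:=R_{W_-}(A_\omega x-\dot x)$, which is precisely the maximizer in that duality; it yields the continuity bound and the inf-sup with interchanged spaces at once. Where you genuinely diverge is in obtaining \eqref{eq:reg_prop2_infsup}: the paper constructs nothing for this step, but proves nondegeneracy of the adjoint (condition \eqref{eq:BNB2A} with interchanged spaces) by contradiction — a regularity bootstrap showing that an annihilating $y^*$ lies in $L^2((0,T);W_-^*)\cap H^1((0,T);W_+^*)$, solves the time-reversed homogeneous equation, and must vanish by the already-proved lower bound applied in switched spaces — and then transfers the constant $1$ to the stated ordering of spaces via the BNB theorem and the swap identity \eqref{eq:interchangespaces}. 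You instead prove \eqref{eq:reg_prop2_infsup} directly and constructively: surjectivity of $w\mapsto-\dot w+A_\omega w$ from $\til{\cX}_{0,\{T\}}$ onto $L^2((0,T);W_-)$ by time reversal and forward solvability, then pick $w$ whose image is the Riesz representative of $v$, giving ratio exactly $1$. Your route is more explicit (it exhibits the optimal test function), but it needs a stronger input than the paper's: existence plus the maximal-regularity conclusion $\dot z=g-A_\omega z\in L^2((0,T);W_-)$ for the forward problem in the scale $W_+\subset W_0\subset W_-$, which requires checking that this shifted triple is an admissible Gelfand triple on which the form is bounded and coercive so that \S~\ref{subsec:inf_sup_alt} applies — the standing assumption in \S~\ref{subsec:further_spatial} literally states only a norm bound in $\sL(W_+,W_-)$, so this identification should be made explicit, though it is implicit in the paper's own use of $A_\omega^{1/2}$ on $W_-$. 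The paper's route avoids this by only ever reusing the interchanged inf-sup it has already proved. Your flagged caveat about justifying the product rule for $t\mapsto\|A_\omega^{1/2}w(t)\|_{W_-}^2$ by density is fair, and the paper asserts the same identity without further detail, so you are not less rigorous on that point.
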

\begin{remark}
Reasonable choices of $W_-$ and $W_+$ 
for elliptic operators of order $2m$ on a 
bounded domain $\Lambda$ are
\begin{align*}
W_+:=H^{m+\alpha}(\Lambda) \hookrightarrow W_0=H^{\alpha}(\Lambda) 
\hookrightarrow 
W_-:=H^{-m+\alpha}(\Lambda).
\end{align*}
One arrives at the canonical situation when choosing shift parameter 
$\alpha:=0$. 
The setup in \S~\ref{subsec:inf_sup_gen} and \S~\ref{subsec:inf_sup_alt} for 
time-independent operators
is covered by $W_+:=V$ and $W_-:=V^*$ and the choice made in \cite{CheginiStev} 
by
$W_+:=W$ and $W_-:=H$ with the notion from \cite{CheginiStev} including 
explicit 
bounds.
\end{remark}

\begin{proof}
We hide the $\omega$-dependence in some terms for better readability. 
The 
continuity \eqref{eq:BDD} follows by using the Cauchy--Schwarz inequality, boundedness 
of the spatial operator and H\"older's inequality:
\begin{eqnarray*}
|\langle {B_{\omega}} y , x \rangle|
&=& \int_0^T \duals{y(t)}{A_{\omega}x(t)-\dot x(t)}{W_-^*}{W_-} \,\dd t \\
&\le& \int_0^T \|y(t)\|_{W_-^*} \|A_{\omega}x(t)-\dot x(t)\|_{W_-}\, \dd t 
\le  \|y\|_{\til{\cY}} |x|_{\til{\cX}_\omega}, 
\end{eqnarray*}
since
$
-2\int_0^T\langle A_{\omega}x(t),\dot x(t)\rangle_{W_-} \dd t
= \|A_{\omega}^{\frac12}x(0)\|_{W_-}^2.
$
Next we prove the inf-sup condition \eqref{eq:BNB1A}, with interchanged 
trial and test spaces first. 
To this end, we
choose $y_x:=R_{W_-}(A_{\omega}x-\dot x) \in {\til \cY}$, with
Riesz isomorphism $R_{W_-}\colon W_- \to W_-^*$, according to Riesz 
representation theorem
for arbitrary $x \in {\til \cX}_{0,\{T\}}$.
With this choice we obtain
\begin{equation*}
 \duals{{B_{\omega}}y_x}{x}{}{}
 = \int_0^T (\|\dot x(t)\|_{W_-}^2
 + \|A_{\omega} x(t)\|_{W_-}^2)\, \dd t 
 - 2 \int_0^T \inners{ A_{\omega}x(t)}{\dot x(t)}{W_-}\, \dd t
 = |x|_{\til{\cX}_\omega}^2.
\end{equation*}
Combining this with
$ \|y_x\|_{{ \til\cY}}^2 
=
 |x|_{{ \til\cX_{\omega}}}^2 
$ 
proves \eqref{eq:BNB1A} with interchanged spaces.

What remains to show is the surjectivity \eqref{eq:BNB2A} with interchanged 
trial and test spaces.
This part relies on the approach from \cite[Prop. 2.2 and Prop. 
2.3]{Fra} so we only sketch it here. If we assume that there is 
$y^* \in {\til  \cY}\setminus \{0\}$ satisfying
\begin{align*}
\duals{{B_{\omega}}y^*}{x}{}{} = 0 
\qquad \mbox{for all } x \in {\til \cX}_{0,\{T\}},
\end{align*}
then we can conclude that
\begin{equation*}
\int_0^T \duals{y^* (t) }{{\dot x(t) }}{W_-^*}{W_-}\, \dd t
\lesssim \int_0^T \|y^*(t) \|_{W_-^*} \|x(t) \|_{W_+}\,\dd t
< \infty,
\end{equation*}
for all $x \in {\tilde \cX}_{0,\{T\}}$, where we hide the (pathwise) constants 
since 
they 
are not relevant here.
Therefore, it follows that $y^* \in \bar{\cY} := L_2((0,T);W_-^*)\cap 
H^1((0,T);W_+^*)$, by the 
definition of weak derivatives.
Due to the higher regularity, we can integrate by parts and  conclude
$\dot y^*(t)+A_{\omega}y^*(t) =0$ in $W_+^*$ for a.e. $t\in (0,T)$ with 
$y^*(0)=0$. 
The affine transformation $\bar{y}^*(\cdot):=y^*(T-\cdot)$ now yields
\begin{equation}\label{B_bar_def}
-\frac{\dd}{\dd t}\bar{y}^*(t) + A_{\omega}\bar{y}^*(t) = 0, \qquad 
\bar{y}^*(T)=0.
\end{equation}
When we switch the roles $W_-^* \leftrightarrow W_+$ and $W_+^* \leftrightarrow 
W_-$ in the second step of the proof, we obtain
\begin{equation*}
0 = \duals{\bar{B}_{\omega}\bar{y}^*}{z}{{\bar \cX}^*}{{{\bar \cX}}}
 \gtrsim \|\bar{y}^*\| ^2_{\bar \cY},
\end{equation*}
for appropriately chosen $z \in \bar{\cX}:=L_2((0,T),W_+)$ and 
$\bar{B}_{\omega}$ defined via \eqref{B_bar_def}.
To this end,
we can conclude immediately that $y^*= 0$, which 
is a contradiction and therefore yields surjectivity.
\end{proof}
The previous theorem shows that the spatial regularity is inherited to the 
parabolic space-time problem
with the same inf-sup and continuity constants for arbitrary properly chosen 
spaces $W_-$ and $W_+$. If $f(\cdot,\omega) \in 
L^2((0,T);W_+^*)$ and $u_0(\omega) \in W_0$, we obtain an analogous bound 
to \eqref{eq:normboundAS_nt2}:
\begin{equation}\label{eq:normboundAS_time_indep}
\begin{aligned}
\norm{L^2((0,T);W_-^*)}{u}^2 &\leq \Am^{-2} \norm{L^2((0,T);W_+^*)}{f}^2+
\Am^{-1}\norm{W_0}{u_0}^2, \quad \bP\mbox{\rm{-a.s.}},
\end{aligned}
\end{equation}
with $\Am$ as in \eqref{eq:bounds_for_A_shifted}.
We can conclude by Theorems~\ref{thm:bnbthm} and 
\ref{thm:reg_prop2} 
that the $\omega$-dependent part 
of the inf-sup constant behaves as $\Am^{-1}$,
and that the possible unboundedness of $\AM$ plays no 
role in the estimates for the norm of the solution, in contrast to
the general case expressed 
in Theorem~\ref{thm:bnbas}, due to the presence of $\rho$ there.

\section{Results of quasi-optimality}\label{sec:quasi_optimality}

\subsection{Petrov-Galerkin spatial 
semidiscretization}\label{subsec:semidiscrete}
This section is based on the results for the
Petrov-Galerkin spatial semidiscretization of the deterministic heat 
equation in \cite[Chapter~3]{Fra}, to which we refer for more details. We show 
that, for a spatial semidiscretization on 
finite dimensional subspaces $ V_h \subset V $ for which the $H$-orthogonal 
projection $P_h$ is bounded with respect to the $\norm{V}{\cdot}$-norm, 
the error satisfies a 
quasi-optimal inequality with a quasi-optimality constant which is proportional 
to $\rho(\omega)$. 

We start by introducing two different norms on the space 
$V_h^*$, namely:
\begin{equation*}
\begin{aligned}
&\norm{V^*}{v_h} = \sup_{\substack{w\in V\\\norm{V}{w} 
=1}}{\dual{v_h}{w}}, &&\norm{V_h^*}{v_h} &= \sup_{\substack{w_h\in 
V_h\\\norm{V}{w_h} 
=1}}{\duals{v_h}{w_h}{V_h^*}{V_h}}.
\end{aligned}
\end{equation*}
The two norms are equivalent, $ 
\norm{V_h^*}{v_h} \leq \norm{V^*}{v_h} \leq  c_h \norm{V_h^*}{v_h} $, 
but 
the constant $c_h:= \sup_{v_h \in 
V_h}{\frac{\norm{V^*}{v_h}}{\norm{V_h^*}{v_h}}}$ 
might in general not be uniform in the choice of the subspace $V_h$.
If we denote by $P_h$ the extension to $V^*$ of the $H$-orthogonal 
symmetric projection onto $V_h$, the following holds (see 
\cite[Proposition 3.2]{Fra}): 
\begin{equation}\label{eq:def_c_h}
c_h = \norm{\cL(V^*)}{P_h} = \norm{\cL(V^*)}{I-P_h} = 
\norm{\cL(V)}{I-P_h} = \norm{\cL(V)}{P_h}.
\end{equation}
We assume in the remainder of this article that the spatial mesh is such that 
$P_h$ is stable in the $V$-norm, that is, the validity of the following (see, 
e.g., \cite{Bank}):
\begin{align}\label{eq:c_h_unif_bdd}
c_h:= \sup_{v_h \in 
V_h}{\frac{\norm{V^*}{v_h}}{\norm{V_h^*}{v_h}}} < \infty, \quad \mbox{ for all 
} h. 
\end{align}
Within this new setting, we introduce the following semidiscrete 
spaces:
\begin{align*}
{\cY}^h:= L^2((0,T);V_h), \quad {\cX_{0,\{T\}}^h} := L^2((0,T);V_h) \cap 
H_{0,\{T\}}^1((0,T);V_h^*). 
\end{align*}
The $ \norm{{\cX}}{\cdot}$-norm on $\cX_{0,\{T\}}^h$ is modified as follows:
\begin{equation*}
\norm{{\cX}^h}{x_h}^2:= \norm{H}{x_h(0)}^2 + \int_0^T{(\norm{V}{x_h(t)}^2 +
\norm{V_h^*}{\dot{x}_h(t)}^2 )\,\dd t},
\end{equation*}
and similarly for the $\aabs{{\cX}_{\omega}^h}{\cdot}$-norm.
This replacement is necessary in order to adapt the proof of 
Lemma~\ref{thm:bnbas}
to the new framework. The following results hold.
%%%%%%%%%%%%%%%%%%%%%%%%%%%%%%%%%%%%%%%%%%%%%%%%%%%%%%%%%%%%%%%%%%%%%%%
%%%%%%%%%%%%%%%%%%%%%%%%%%%%%%%%%%%%%%%%%%%%%%%%%%%%%%%%%%%%%%%%%%%%%%%
%%%%%%%%%%%%%%%%%%%%%%%%%%%%%%%%%%%%%%%%%%%%%%%%%%%%%%%%%%%%%%%%%%%%%%%
%%
%%	B N B    D I S C R E T E 
%%
%%%%%%%%%%%%%%%%%%%%%%%%%%%%%%%%%%%%%%%%%%%%%%%%%%%%%%%%%%%%%%%%%%%%%%%
%%%%%%%%%%%%%%%%%%%%%%%%%%%%%%%%%%%%%%%%%%%%%%%%%%%%%%%%%%%%%%%%%%%%%%%
%%%%%%%%%%%%%%%%%%%%%%%%%%%%%%%%%%%%%%%%%%%%%%%%%%%%%%%%%%%%%%%%%%%%%%%
\begin{theorem}\label{thm:bnbthm_sd}
Under the assumptions of Theorem~\ref{thm:bnbthm}, the bilinear form $\sB_{\omega}^*(\cdot,\cdot)$ satisfies the conditions 
\eqref{eq:BNB1A},
\eqref{eq:BNB2A}, and \eqref{eq:BDD} $\bP$-a.s., on the couple of subspaces $
(\cY^h, \aabs{\cY_{\omega}}{\cdot})$ and 
$(\cX_{0,\{T\}}^h,\aabs{\cX^{h}_{\omega}}{\cdot})$, with the same inf-sup 
constant and continuity constant as in 
Theorem~\ref{thm:bnbthm}.
\end{theorem}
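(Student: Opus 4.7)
The plan is to reproduce the proof of Theorem~\ref{thm:bnbthm} verbatim, but with every spatial operation restricted to $V_h\subset V$. Concretely, I would introduce the discrete spatial operator $A_\omega^h(t)\colon V_h\to V_h^*$ defined by $\duals{A_\omega^h(t)u_h}{v_h}{V_h^*}{V_h}:=a(t,\omega;u_h,v_h)$, which inherits self-adjointness together with the coercivity and boundedness bounds with the \emph{same} $\Am(\omega)$ and $\AM(\omega)$. In particular $A_\omega^h(t)$ admits fractional powers obeying the discrete analogues of the norm equivalences used to define \eqref{eq:tilde_norms}, with $V$ and $V^*$ replaced by $V_h$ and $V_h^*$ throughout.

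Continuity \eqref{eq:BDD} is essentially immediate: for any $x_h\in\cX_{0,\{T\}}^h$ one has $\dot{x}_h(t)\in V_h^*$, so that the pairing $\tdual{y_h(t)}{-\dot{x}_h(t)}$ in \eqref{eq:bilinear_form} is naturally interpreted as the $V_h\times V_h^*$ duality, bounded by $\norm{V_h^*}{\dot{x}_h(t)}\norm{V}{y_h(t)}$, and $\norm{V_h^*}{\cdot}$ is exactly the term that was built into $\aabs{\cX_\omega^h}{\cdot}$. Applying the Cauchy--Schwarz estimates of the continuous proof thus yields the same continuity constant.

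For the inf-sup condition \eqref{eq:BNB1A}, given $y_h\in\cY^h$ I would construct the optimal discrete test function $x_{y_h}\in\cX_{0,\{T\}}^h$ by solving the \emph{discrete} backward parabolic problem
\begin{equation*}
-\dot{x}_{y_h}(t)+A_\omega^h(t)x_{y_h}(t)=A_\omega^h(t)y_h(t)\text{ in }V_h^*,\qquad x_{y_h}(T)=0,
\end{equation*}
which is well-posed by finite-dimensional linear ODE theory. Computing $\sB_\omega^*(y_h,x_{y_h})$ and $\aabs{\cX_\omega^h}{x_{y_h}}$ via the same integration-by-parts identities as in Theorem~\ref{thm:bnbthm} reproduces the equality $\sB_\omega^*(y_h,x_{y_h})=\aabs{\cY_\omega}{y_h}\,\aabs{\cX_\omega^h}{x_{y_h}}$, giving inf-sup constant $1$. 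The surjectivity condition \eqref{eq:BNB2A} then follows from the symmetric characterization \eqref{eq:interchangespaces}, applied by repeating the same auxiliary-problem construction with the roles of trial and test spaces interchanged (and time reversed).

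The main technical point, and the reason $\aabs{\cX_\omega^h}{\cdot}$ is defined with $\norm{V_h^*}{\cdot}$ rather than $\norm{V^*}{\cdot}$, is that only the discrete dual norm keeps the auxiliary construction entirely inside $V_h$ without loss of constants; using $\norm{V^*}{\cdot}$ instead would inject the factor $c_h$ from \eqref{eq:def_c_h} into the inf-sup constant and break the equality $c_B=1$. I do not expect a conceptual obstacle beyond carefully transcribing the construction of \cite{LarssonMolteni2} within the finite-dimensional subspace.
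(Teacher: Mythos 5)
Your proposal is correct and follows exactly the route the paper intends: the paper omits the proof with the remark that it is ``essentially the same as in the continuous case,'' i.e.\ the construction behind Theorem~\ref{thm:bnbthm} (based on \cite[Theorem~3]{LarssonMolteni2}) carried out inside $V_h$, which is precisely your discrete operator $A^h_\omega$, the auxiliary backward problem $-\dot{x}+A^h_\omega x=A^h_\omega y_h$, and the resulting equalities $\sB_\omega^*(y_h,x_{y_h})=\aabs{\cY_\omega}{y_h}^2=\aabs{\cY_\omega}{y_h}\,\aabs{\cX^h_\omega}{x_{y_h}}$. Your observation that the discrete dual norm $\norm{V_h^*}{\cdot}$ (equivalently the $(A^h_\omega)^{-1/2}$-weighted norm) in $\aabs{\cX^h_\omega}{\cdot}$ is what keeps the constants equal to one, rather than introducing $c_h$, matches the reason the paper modifies the $\cX$-norm on $\cX^h_{0,\{T\}}$.
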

Since the proof is essentially the same as in the continuous case, we refrain 
from presenting any more details here.
A direct consequence of the result above is the almost sure 
existence of a unique solution $u_h$ to the
semidiscrete problem:
\begin{equation} \label{eq:semispace}
u_h \in \cY^h \colon \sB_{\omega}^*(u_h,x_h) = \sF_{\omega}(x_h), 
\quad \forall
x_h \in {\cX^h_{0,\{T\}}},\quad {\rm a.e. } ~ \omega \in \Omega.
\end{equation}
The solution satisfies the same bound as its continuous counterpart in
\eqref{eq:normboundAS_nt2}:
%and \eqref{eq:normboundAS_nt3}, that is to say
\begin{equation*}% \label{semispacenormbound}
\begin{aligned}
\norm{L^2((0,T);V)}{u_h}^2 &\leq  
\Am^{-2} \norm{{L^2((0,T);V^*)}}{f}^2 +
  \Am^{-1} \norm{H}{u_0}^2, \quad \bP\mbox{\rm{-a.s.}}
\end{aligned}
\end{equation*}
In view of Theorem~\ref{thm:bnbthm_sd} we 
can now exploit the quasi-optimality theory and derive a bound for the error 
in terms of the quasi-optimality constant. Recall that $\rho = 
\frac{\AM}{\Am}$; the following lemma,
modification of \cite[Theorem 3.4]{Hackbusch}, holds.
 
\begin{theorem}\label{thm:quasi_optimality_sd}
If \eqref{eq:c_h_unif_bdd} holds, then the Galerkin method introduced in this 
section is quasi-optimal, and satisfies 
the estimate
\begin{equation}\label{eq:quasioptimalityEstimate_sd}
\norm{\cY}{u - u_h} \leq c_{h} (1 + \rho) 
\inf_{y_h \in \cY^h}{\norm{\cY^h}{u-y_h}}, \quad \bP\mbox{\rm{-a.s.}}
\end{equation}
\end{theorem}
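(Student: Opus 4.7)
The result is a Céa-type quasi-optimality estimate for the Petrov-Galerkin pair $(\cY^h,\cX^h_{0,\{T\}})$, and I would follow the classical two-step route: Galerkin orthogonality followed by the discrete inf-sup condition, with a careful accounting of the mismatch between continuous and discrete dual norms.

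First I would observe that, since $\cX^h_{0,\{T\}}\subset\cX_{0,\{T\}}$, subtracting \eqref{eq:semispace} from \eqref{eq:secondspacetime} tested against any $x_h\in\cX^h_{0,\{T\}}$ yields the Galerkin orthogonality
\begin{equation*}
\sB^*_\omega(u-u_h,x_h)=0,\qquad\forall x_h\in\cX^h_{0,\{T\}},\quad\bP\mbox{-a.s.}
\end{equation*}
Next, for an arbitrary $y_h\in\cY^h$, I would decompose $u-u_h=(u-y_h)-(u_h-y_h)$ and apply the triangle inequality. The first summand is already in best-approximation form; the second, since $u_h-y_h\in\cY^h$, can be controlled by the discrete inf-sup of Theorem~\ref{thm:bnbthm_sd}, which gives
\begin{equation*}
\norm{\cY}{u_h-y_h}\leq\frac{1}{c_B^h}\sup_{0\neq x_h\in\cX^h_{0,\{T\}}}\frac{\sB^*_\omega(u_h-y_h,x_h)}{\norm{\cX^h}{x_h}}.
\end{equation*}
By Galerkin orthogonality the numerator equals $\sB^*_\omega(u-y_h,x_h)$, which I would then bound using the continuity of $\sB^*_\omega$ on $\cY\times\cX^h_{0,\{T\}}$ equipped with the mixed norms $\norm{\cY}{\cdot}$ and $\norm{\cX^h}{\cdot}$.

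The key bookkeeping lies in this continuity estimate. The spatial bilinear form contributes a factor at most $\AM\,\norm{V}{u-y_h}\norm{V}{x_h}$, while the time-derivative term $\int\dual{u-y_h}{-\dot x_h}\,\dd t$ is naturally bounded in terms of $\norm{V^*}{\dot x_h}$; passing from $\norm{V^*}{\dot x_h}$ to the $\norm{V_h^*}{\dot x_h}$ appearing in $\norm{\cX^h}{x_h}$ costs exactly the factor $c_h$ of \eqref{eq:def_c_h}. On the other hand, translating the discrete inf-sup from the $\omega$-dependent norms of Theorem~\ref{thm:bnbthm_sd} into the standard $\norm{\cY}{\cdot},\norm{\cX^h}{\cdot}$ pairing via $\Am^{1/2}\norm{\cY}{\cdot}\leq\aabs{\cY_\omega}{\cdot}\leq\AM^{1/2}\norm{\cY}{\cdot}$ leaves $c_B^h$ proportional to $\Am$. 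Dividing yields a ratio of order $c_h\rho=c_h\AM/\Am$, and the ``$1$'' in $(1+\rho)$ arises from the initial triangle-inequality step. Since $c_h\geq 1$ by \eqref{eq:def_c_h}, the sum $1+c_h\rho$ can be absorbed into $c_h(1+\rho)$, giving the stated constant.

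\textbf{Main obstacle.} The abstract Céa inequality is textbook material; the real challenge is ensuring that the final constant collapses to exactly $c_h(1+\rho)$ rather than a cruder product of $\AM$, $\Am^{-1}$ and $c_h$, in particular that $c_h$ appears to the first power only and that $\AM$ and $\Am$ enter solely through the single ratio $\rho$. I would most likely perform the entire inf-sup/continuity estimate in the $\omega$-dependent norms of \eqref{eq:tilde_norms}, where Theorem~\ref{thm:bnbthm_sd} collapses both $c_B^h$ and $C_B$ to $1$, so that the only surviving factors are the $V^*\leftrightarrow V_h^*$ equivalence constant $c_h$ and the norm equivalences linking $\aabs{\cY_\omega}{\cdot},\aabs{\cX^h_\omega}{\cdot}$ to $\norm{\cY}{\cdot},\norm{\cX^h}{\cdot}$; doing the translation only at the end keeps the powers of $\Am$ and $\AM$ from splitting apart and guarantees the sharp form of the quasi-optimality constant.
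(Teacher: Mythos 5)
Your plan is a valid route to the estimate, but it is genuinely different from the paper's proof. The paper does not use Galerkin orthogonality plus the discrete inf-sup at all: it writes $u_h=R_{\cY^h}u$ with the Petrov--Galerkin solution operator, invokes the identity $\norm{\sL(\cY)}{I-R_{\cY^h}}=\norm{\sL(\cY)}{R_{\cY^h}}$ for idempotent operators on a Hilbert space (so no additive ``$1$'' from a triangle inequality between $u-y_h$ and $u_h-y_h$), and then bounds the projection norm directly by an energy argument: the quantity $e_h:=u_h-P_hu$ is shown to satisfy a semidiscrete parabolic equation with source $\Ao(u-P_hu)$ and $e_h(0)=0$, and testing with $e_h$ gives $\Am\norm{\cY}{e_h}^2\le\AM\norm{\cY}{u-P_hu}\norm{\cY}{e_h}$, whence $\norm{\cY}{u_h}\le c_h(1+\rho)\norm{\cY}{u}$ using $\norm{\cL(V)}{P_h}=\norm{\cL(V)}{I-P_h}=c_h$. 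Your Babu\v{s}ka/C\'ea route instead pays continuity divided by the discrete inf-sup; as you yourself note, it only yields the sharp constant if carried out in the weighted norms \eqref{eq:tilde_norms}, and even there your bookkeeping is slightly off: the switch from the (weighted) $V^*$- to the (weighted) $V_h^*$-dual norm of $\dot x_h$ costs the $\Ao^{1/2}$-weighted stability constant of $P_h$, which is bounded by $c_h\rho^{1/2}$ rather than ``exactly $c_h$'', and the claim that translating the inf-sup ``leaves $c_B^h$ proportional to $\Am$'' is not accurate if the test norm is also unweighted. Done carefully (inf-sup step entirely in weighted norms, triangle inequality and unweighting of the $\cY$-norms only at the end), your argument produces $\norm{\cY}{u-u_h}\le(1+c_h\rho)\norm{\cY}{u-y_h}\le c_h(1+\rho)\norm{\cY}{u-y_h}$, so the statement follows; what the paper's operator-theoretic argument buys is that it bypasses the dual-norm comparison and the discrete inf-sup theorem altogether and bounds the norm of the Galerkin projection itself, while your route is the generic machinery and in fact delivers the formally slightly smaller constant $1+c_h\rho$.
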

% % % % % % % % % % % % % % %
\begin{proof}
We assume as usual that $\omega\in\Omega$ is given; for arbitrary 
$ y_h \in \cY^h $ we have 
\begin{align}
 u - u_h = u - R_{\cY^h}u  = (I-R_{\cY^h})u  = (I-R_{\cY^h})(u-y_h),
\end{align}
where $R_{\cY^h}u := u_h$ denotes the Ritz projection. 
By taking norms, we have
\begin{align}\label{eq:q_opt_sd2}
\norm{\cY}{ u - u_h} \leq \norm{\sL(\cY)}{I-R_{\cY^h}}  \norm{\cY}{u-y_h}.
\end{align}
In particular, since $R_{\cY^h}$ is an idempotent operator on a Hilbert 
space, it holds that $\norm{\sL(\cY)}{I-R_{\cY^h}}  = 
\norm{\sL(\cY)}{R_{\cY^h}}$ (see, e.g., \cite{XuZik}), so that in order to 
prove the claim all we need to to is to bound the norm of the Ritz projection.

By means of the
triangle inequality, we preliminary get
\begin{equation}\label{eq:trianglequasioptproof}
\norm{\cY}{ R_{\cY^h} u} = \norm{\cY}{u_h} \leq \norm{\cY}{u_h - P_h u} +
\norm{\cY}{P_h u}.
\end{equation}
We denote by $e_h$ the quantity $u_h - P_h u$. By using the 
definition of $P_h$, we have that for every $x_h\in \cX_{0,\{T\}}^h$
\begin{equation*}
\int_0^T{ \dual{-\dot{x}_h (t) }{e_h(t)}}\,\dd t  =  
\int_0^T{ \dual{-\dot{x}_h(t)}{u_h(t) - u(t)}}\,\dd t .
\end{equation*}
Since the right-hand side is the difference between the solutions of the 
continuous and the discrete problems, we obtain that
\begin{align*}
\int_0^T{ \dual{-\dot{x}_h(t)}{e_h(t)}}\,\dd t  
&=  \int_0^T{ \dual{-\dot{x}_h(t)}{u_h(t) - u(t)}}\,\dd t \\
&=  \int_0^T{ \dual{-\Ao x_h (t)}{u_h(t) - u(t)}}\,\dd t \\
&=  \int_0^T{\dual{\Ao(t)(u(t)-u_h(t))}{x_h(t)}}\,\dd t,
\end{align*}
which proves that $e_h \in H^1((0,T);V_h^*)$. The last expression can be 
rewritten as
\begin{equation}
\int_0^T{ \dual{-\dot{x}_h(t)}{e_h(t)}}\,\dd t  +
\int_0^T{ \dual{\Ao(t)(u_h(t)-u(t))}{x_h(t)}}\,\dd t = 0,
\end{equation}
which by adding and subtracting $P_h u$, becomes:
\begin{equation}\label{eq:beforeProofQOspace_sd}
\begin{aligned}
&\int_0^T{ \dual{-\dot{x}_h(t)}{e_h(t)}}\,\dd t  +
\int_0^T{ \dual{\Ao(t)(u_h(t) - P_h u(t))}{x_h(t)}}\,\dd t 
\\&\qquad = \int_0^T{ 
\dual{\Ao(t)(u(t) - P_h u(t) )}{x_h(t)}}\,\dd t.
\end{aligned}
\end{equation}
If we integrate by parts in \eqref{eq:beforeProofQOspace_sd}, for 
every $x_h\in \sC^{\infty}_0((0,T);V_h)$ we have:
\begin{equation}\label{eq:equalityweakderivativeerror}
\begin{aligned}
&\int_0^T{\Big(  \dual{\dot{e}_h(t)}{x_h(t)} + \dual{\Ao(t)e_h(t)}{x_h(t)}  
\Big)}\,\dd 
t\\
&\qquad \qquad \qquad = \int_0^T{ 
\dual{\Ao(t)(u(t)-P_h{u(t)})}{x_h(t)}}\,\dd 
t.
\end{aligned}
\end{equation}
By density, the formula in \eqref{eq:equalityweakderivativeerror} holds also 
for 
every $x_h\in L^{2}((0,T);V_h)$. 

By testing with $x_h = (T-t)\phi_h$, $\phi_h \in V_h$, integrating by parts 
directly 
in \eqref{eq:beforeProofQOspace_sd} and 
subtracting the
resulting expression from \eqref{eq:equalityweakderivativeerror}, we deduce 
that
$e_h(0)=0$. Testing now \eqref{eq:equalityweakderivativeerror} with 
$x_h=e_h$, we obtain that
\begin{equation*}%\label{equalityweakderivativeerror2}
\frac{1}{2}\norm{H}{e_h(T)}^2 + 
\int_0^T{ \dual{\Ao (t) e_h(t)}{e_h(t)}  }\,\dd t
\leq
\AM \norm{\cY}{u-P_hu}\norm{\cY}{e_h}.
\end{equation*}
This, in turns, implies that
\begin{equation*}
\begin{aligned}
\Am \norm{\cY}{e_h}^2 &\leq
\AM \norm{\cY}{u-P_hu}\norm{\cY}{e_h} ,
\end{aligned}
\end{equation*}
that is,
\begin{equation}\label{eq:finalestimatequasiopt}
\norm{\cY}{u_h - P_hu}  \leq \frac{\AM}{ \Am }
\norm{\cY}{u-P_hu}.
\end{equation}
Using \eqref{eq:finalestimatequasiopt} in \eqref{eq:trianglequasioptproof}, and 
then \eqref{eq:def_c_h}, we obtain that 
\begin{align*}
\norm{\cY}{u_h} &\leq  c_h \Big(1 + \frac{\AM}{ \Am }\Big)\norm{\cY}{u},
\end{align*}
which proves that the norm of the Ritz projection is bounded as follows:
\begin{align}\label{eq:q_opt_sd1}
\norm{\sL(\cY)}{R_{\cY^h}} \leq c_h \Big(1 + 
\frac{\AM}{ \Am }\Big).
\end{align}
Since $\omega \in \Omega$ arbitrary, the claim follows by combining 
\eqref{eq:q_opt_sd1} and \eqref{eq:q_opt_sd2}.
\end{proof}

\subsection{Petrov-Galerkin full discretization}\label{subsec:discrete}
This section is based on the results for the
Petrov-Galerkin discretization of the deterministic heat equation presented in  
\cite{Fra}, \cite{UrbanPatera2}, \cite{Andreev1}, and \cite{LarssonMolteni2}, to which we 
refer for more details. We show that, for a full discretization based 
on finite-dimensional tensor spaces, the error satisfies a quasi-optimal 
inequality, with a quasi-optimality constant which not only depends on $\rho$, 
but also on a term proportional to $\AM$.

We consider a partition
of the time interval $[0,T]$, given by $\cT_k = \{ t_{i-1} < t \leq t_{i}, 
\,i=1,\ldots,N\}$, where $k_i := {t_{i+1}-t_{i}}$, and $k = 
{\max}_i {k_i}$. We denote by $S_{k,q+1}$ 
the 
space of continuous functions of $t$ that are piecewise polynomials 
of degree at most $q+1$ and by $Q_{k,q}$ the space 
of functions which are piecewise 
polynomials of degree at most $q$. We define 
the finite-dimensional subspaces 
$\ctiiY := Q_{k,q} \otimes 
V_h $, and $ \ctiiX_{0,\{T\}} := \{ X \in S_{k,q+1}
\otimes V_h \colon X(T) = 0\}$, for some finite-dimensional subspace $V_h 
\subset V$. We assume in the remainder of this section that $A$ is as in 
\S~\ref{subsec:inf_sup_alt} and  does not depend on time.
The discretized problem can be written in variational form as
\begin{equation} \label{eq:discrete_form_q}
U \in {\ctiiY}\colon \sB_{\omega}^*(U,X) = 
\sF_{\omega}(X), 
\quad \forall X \in
{\ctiiX_{0,\{T\}}} , \quad {\rm a.e. } ~ \omega \in \Omega.
\end{equation}
We introduce the following norm:
\begin{align}\label{eq:mod_norm}
\aabs{\ctiiXo}{X}^2&:= 
\sum_{i=0}^{N-1}{\int_{I_i}{\Big(\norm{H}{\Ao^{-\frac12}\dot{X}(t)}^2 
+
\norm{H}{\Ao^{\frac{1}{2}}\Pi^{(q)}_i X(t)}^2 \Big)\,\dd t}} + \norm{H}{X(0)}^2,
\end{align}
where $\Pi_i^{(q)}$ is locally defined on each $I_i$ as the orthogonal 
$L^2$-projection onto the space of polynomials of degree at most $q$.
In \cite{AndreevPhd}, it is shown that the norm
\begin{align}%\label{eq:mod_norm}
\aabs{\cX_k}{X}^2&:= 
\sum_{i=0}^{N-1}{\int_{I_i}{\Big(\norm{V^*}{\dot{X}(t)}^2 
+
\norm{V}{\Pi^{(q)}_i X(t)}^2 \Big)\,\dd t}} + \norm{H}{X(0)}^2.
\end{align}
is equivalent to the norm $\norm{\cX}{\cdot}$, but the equivalence is 
not uniform in the choice of $h$ and $k$ unless the following CFL condition is 
assumed to hold:
\begin{align}\label{eq:CFL}
c_S = k \sup_{v \in V_h}{\frac{ \norm{V}{v}\hfill }{ 
\norm{V^*}{v}}}  < \infty, \qquad \mbox{for all $h$ and $k$}.
\end{align}
%In our framework, 
Similar conclusions hold for the norm 
$\aabs{\ctiiXo}{\cdot}$ and the norm $\aabs{\cX_{\omega}}{\cdot}$, but the 
equivalence 
constant is now also $\omega$-dependent, as shown in the next lemma, 
based on \cite[Proof of (5.2.63)]{AndreevPhd}.
%%%%%%%%%%%%%%%%%%%%%%%%%%%%%%%%%%%%%%%%%%%%%%%%%%%%%%%%%%%%%%%%%%%%%%%%%%
%%%%%%%%%%%%%%%%%%%%%%%%%%%%%%%%%%%%%%%%%%%%%%%%%%%%%%%%%%%%%%%%%%%%%%%%%%
%%%%%%%%%%%%%%%%%%%%%%%%%%%%%%%%%%%%%%%%%%%%%%%%%%%%%%%%%%%%%%%%%%%%%%%%%%
%%%%%%%%%%%%%%%%%%%%%%%%%%%%%%%%%%%%%%%%%%%%%%%%%%%%%%%%%%%%%%%%%%%%%%%%%%
%%
%%
%%
%%
%%
%%%%%%%%%%%%%%%%%%%%%%%%%%%%%%%%%%%%%%%%%%%%%%%%%%%%%%%%%%%%%%%%%%%%%%%%%%
%%%%%%%%%%%%%%%%%%%%%%%%%%%%%%%%%%%%%%%%%%%%%%%%%%%%%%%%%%%%%%%%%%%%%%%%%%
%%%%%%%%%%%%%%%%%%%%%%%%%%%%%%%%%%%%%%%%%%%%%%%%%%%%%%%%%%%%%%%%%%%%%%%%%%
%%%%%%%%%%%%%%%%%%%%%%%%%%%%%%%%%%%%%%%%%%%%%%%%%%%%%%%%%%%%%%%%%%%%%%%%%%
\begin{lemma}\label{lemma:cfl_omega}
The following relationship holds for any $X \in 
\ctiiX_{0,\{T\}}$,  $\bP$-a.s.:
\begin{align}\label{eq:equiv_norm_cX}
\aabs{\ctiiXo}{X} \leq \aabs{ \cX_{\omega}}{X}  \leq C (1 + \AM c_{S} )
\aabs{\ctiiXo}{X}.
\end{align}
More precisely,
\begin{equation}\label{eq:equiv_norm_cX_expl}
\begin{aligned}
&\norm{H}{X(0)}^2 + \int_{I_i} \norm{H}{\Ao^{-\frac12} \dot{X}(t) }^2\,\dd t + 
\int_{I_i} \norm{H}{\Ao^{\frac12} {X}(t) }^2\,\dd t \\
&\quad \leq \norm{H}{X(0)}^2 
+(1+ c_{S,\omega}^2) 
\int_{I_i} \norm{H}{\Ao^{-\frac12} \dot{X}(t) }^2\,\dd t + 
\int_{I_i} 
\norm{H}{\Ao^{\frac12} \Pi^{(q)}{X}(t) }^2\,\dd t,
\end{aligned}
\end{equation}
where $c_{S,\omega}^2 \leq \frac{\AM^2}{12} c_S^2$ .
\end{lemma}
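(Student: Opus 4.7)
The plan is to prove both inequalities of \eqref{eq:equiv_norm_cX} interval by interval, exploiting two structural facts: since $\Ao$ is time-independent, the $L^2(I_i;H)$-orthogonal projection $\Pi^{(q)}_i$ (which acts only in the time variable) commutes with $\Ao^{\pm 1/2}$; and on each $I_i$ the residual $R_i:=(I-\Pi^{(q)}_i)X|_{I_i}$ takes the form $R_i(t)=\xi_i\,L_{q+1,i}(t)$ with $\xi_i\in V_h$ and $L_{q+1,i}$ the Legendre polynomial of degree $q+1$ rescaled to $I_i$. The lower bound then follows immediately: contractivity of $\Pi^{(q)}_i$ in $L^2(I_i;H)$ combined with the commutation yields
\begin{equation*}
\int_{I_i}\|\Ao^{1/2}\Pi^{(q)}_i X(t)\|_H^2\,\dd t \le \int_{I_i}\|\Ao^{1/2}X(t)\|_H^2\,\dd t,
\end{equation*}
and since the $\dot X$- and $X(0)$-contributions coincide in $\aabs{\ctiiXo}{\cdot}$ and $\aabs{\cX_{\omega}}{\cdot}$, summing over $i$ gives $\aabs{\ctiiXo}{X}\le\aabs{\cX_{\omega}}{X}$.

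For the upper bound \eqref{eq:equiv_norm_cX_expl}, I would apply the Pythagorean identity in $L^2(I_i;H)$ to obtain $\int_{I_i}\|\Ao^{1/2}X\|_H^2\,\dd t=\int_{I_i}\|\Ao^{1/2}\Pi^{(q)}_i X\|_H^2\,\dd t+\int_{I_i}\|\Ao^{1/2}R_i\|_H^2\,\dd t$, reducing the task to controlling the remainder. Separating time and space in $R_i=\xi_i L_{q+1,i}$ gives $\int_{I_i}\|\Ao^{1/2}R_i\|_H^2\,\dd t = \|\Ao^{1/2}\xi_i\|_H^2\,\|L_{q+1,i}\|_{L^2(I_i)}^2$. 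For the spatial factor I would chain the primal continuity bound $\|\Ao^{1/2}\xi\|_H\le\AM^{1/2}\|\xi\|_V$, the CFL condition \eqref{eq:CFL} (which reads $\|\xi\|_V\le(c_S/k)\|\xi\|_{V^*}$ on $V_h$), and its dual counterpart $\|\xi\|_{V^*}\le\AM^{1/2}\|\Ao^{-1/2}\xi\|_H$ to conclude
\begin{equation*}
\|\Ao^{1/2}\xi_i\|_H^2 \le \frac{\AM^2 c_S^2}{k^2}\,\|\Ao^{-1/2}\xi_i\|_H^2.
\end{equation*}
A direct Legendre-polynomial computation then gives $\|L_{q+1,i}\|_{L^2(I_i)}^2/\|\dot L_{q+1,i}\|_{L^2(I_i)}^2 = k_i^2/12$ in the relevant case $q=0$, producing the advertised constant $c_{S,\omega}^2\le\AM^2 c_S^2/12$.

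The remaining step is to replace $\int_{I_i}\|\Ao^{-1/2}\dot R_i\|_H^2\,\dd t$ by $\int_{I_i}\|\Ao^{-1/2}\dot X\|_H^2\,\dd t$ on the right-hand side. For $q=0$ this is trivial because $\dot{\Pi^{(0)}_i X}\equiv 0$ on $I_i$, so $\dot X=\dot R_i$; summing the per-interval estimates yields \eqref{eq:equiv_norm_cX_expl}, and since $1+c_{S,\omega}^2\le C(1+\AM c_S)^2$, the second inequality of \eqref{eq:equiv_norm_cX} follows. The most delicate point, and the reason for going through the chain above rather than a direct norm-equivalence argument, is ensuring that the constant depends on $\AM$ alone and not on the ratio $\rho=\AM/\Am$: this requires exploiting both the continuity estimate $\|\Ao^{1/2}\cdot\|_H\le\AM^{1/2}\|\cdot\|_V$ and \emph{its dual} $\|\cdot\|_{V^*}\le\AM^{1/2}\|\Ao^{-1/2}\cdot\|_H$, each supplying exactly one factor of $\AM^{1/2}$, so that $\Am$ never enters the chain.
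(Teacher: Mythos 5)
Your argument is, in substance, the paper's own proof: the interval-wise representation of $X$ in Legendre modes, the Pythagorean identity for $\Pi^{(q)}_i$ (which is exactly the paper's Legendre-orthogonality step), the isolation of the top temporal mode $\xi_i=v_{q+1,h}$, and the chain $\norm{H}{\Ao^{\frac12}\xi_i}^2\le \AM\norm{V}{\xi_i}^2$, CFL on $V_h$, $\norm{V^*}{\xi_i}^2\le \AM\norm{H}{\Ao^{-\frac12}\xi_i}^2$ is precisely the paper's estimate \eqref{eq:bounds_CFL_omega}, so the key point you emphasize (only $\AM$, never $\Am$, enters) is the same. The genuine shortfall is that your last two steps are carried out only for $q=0$, while the lemma -- and the quasi-optimality results built on it -- is stated for arbitrary polynomial degree $q\ge 0$: for $q\ge 1$ it is false that $\dot X=\dot R_i$ on $I_i$ (the derivative of $\Pi^{(q)}_iX$ does not vanish), and your ratio computation $\|L_{q+1,i}\|_{L^2(I_i)}^2/\|\dot L_{q+1,i}\|_{L^2(I_i)}^2=k_i^2/12$ is likewise specific to $q=0$. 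As written, your proof establishes \eqref{eq:equiv_norm_cX_expl} only for the lowest-order scheme.

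The repair stays entirely within your framework. Since $\frac{\dd}{\dd t}\Pi^{(q)}_iX$ has temporal degree at most $q-1$, the degree-$q$ Legendre component of $\dot X$ on $I_i$ is carried solely by the top mode $\xi_iL_{q+1,i}$; Bessel's inequality in $L^2(I_i;H)$ then gives $\int_{I_i}\norm{H}{\Ao^{-\frac12}\dot X(t)}^2\,\dd t\ \ge\ \norm{H}{\Ao^{-\frac12}\xi_i}^2\,\|(\dot L_{q+1,i})_q\|_{L^2(I_i)}^2$, where $(\dot L_{q+1,i})_q$ denotes that component. The relevant (normalization-independent) ratio is $\|L_{q+1,i}\|_{L^2(I_i)}^2/\|(\dot L_{q+1,i})_q\|_{L^2(I_i)}^2=k_i^2/\bigl(4(2q+1)(2q+3)\bigr)$, which is maximal at $q=0$ where it equals $k_i^2/12$; hence $c_{S,\omega}^2\le\frac{\AM^2}{12}c_S^2$ persists for every $q$. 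This is exactly how the paper proceeds: in place of your identity $\dot X=\dot R_i$ it invokes the lower bound $\int_{I_i}\norm{H}{\Ao^{-\frac12}\dot X(t)}^2\,\dd t\ge\frac{6}{k}\norm{H}{\Ao^{-\frac12}v_{q+1,h}}^2$ from Andreev's thesis, valid for all $q$, and otherwise the two arguments coincide.
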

\begin{proof}
On each subinterval $I_i$ we can represent $X\in \ctiiX_{0,\{T\}} $ as follows:
\begin{align*}
X\big|_{I_i} = \sum_{j=0}^{q+1} L_j^i(t) \otimes v_{j,h},
\end{align*}
for some $ v_{j,h} \in V_h$ and where $L_j^i(t)$ is the Legendre polynomial 
of degree $j$ on the interval $I_i$.
The following estimate holds:
\begin{align*}
\int_{I_i} \norm{H}{\Ao^{-\frac12} \dot{X}(t) }^2\,\dd t \geq \frac{6}{k}  
\norm{H}{\Ao^{-\frac12} v_{q+1,h}}^2.
\end{align*}
The mutual $L^2$-orthogonality of Legendre polynomials leads to
\begin{align*}
\int_{I_i} \norm{H}{\Ao^{\frac12} {X}(t) }^2\,\dd t = 
\frac{k}{2}\norm{H}{\Ao^{\frac12} v_{q+1,h}}^2 + \int_{I_i} 
\norm{H}{\Ao^{\frac12} \Pi^{(q)}{X}(t) }^2\,\dd t,
\end{align*}
which is
\begin{align*}
\int_{I_i} \norm{H}{\Ao^{\frac12} {X}(t) }^2\,\dd t &= 
\frac{k}{2} \frac{ \norm{H}{\Ao^{\frac12} v_{q+1,h}}^2}{\norm{H}{\Ao^{-\frac12} 
v_{q+1,h}}^2} \norm{H}{\Ao^{-\frac12} v_{q+1,h}}^2  + \int_{I_i} 
\norm{H}{\Ao^{\frac12} \Pi^{(q)}{X}(t) }^2\,\dd t\\
&\leq
\frac{k^2}{12} \frac{ \norm{H}{\Ao^{\frac12} 
v_{q+1,h}}^2}{\norm{H}{\Ao^{-\frac12} 
v_{q+1,h}}^2} \int_{I_i} \norm{H}{\Ao^{-\frac12} \dot{X}(t) }^2\,\dd t + 
\int_{I_i} 
\norm{H}{\Ao^{\frac12} \Pi^{(q)}{X}(t) }^2\,\dd t.
\end{align*}
If we introduce the notation
\begin{align*}
c^2_{S,\omega}:= \frac{k^2}{12} \frac{ \norm{H}{\Ao^{\frac12} 
v_{q+1,h}}^2}{\norm{H}{\Ao^{-\frac12} 
v_{q+1,h}}^2},
\end{align*}
we can see that 
\begin{align}\label{eq:last_inequality}
\int_{I_i} \norm{H}{\Ao^{\frac12} {X}(t) }^2\,\dd t \leq c_{S,\omega}^2 
\int_{I_i} \norm{H}{\Ao^{-\frac12} \dot{X}(t) }^2\,\dd t + 
\int_{I_i} 
\norm{H}{\Ao^{\frac12} \Pi^{(q)}{X}(t) }^2\,\dd t,
\end{align}
where the following holds:
\begin{align}\label{eq:bounds_CFL_omega}
c_{S,\omega}^2 \leq \frac{k^2}{12} \frac{\AM 
\norm{V}{v_{q+1,h}}^2}{\AM^{-1}\norm{V^*}{v_{q+1,h}}^2} \leq 
\frac{\AM^2}{12}c_S^2.
\end{align}
By adding $\int_{I_i} \norm{H}{\Ao^{-\frac12} \dot{X}(t) }^2\,\dd t$ to both 
sides of \eqref{eq:last_inequality}, summing over each subinterval and adding 
$\norm{H}{X(0)}^2$, we obtain \eqref{eq:equiv_norm_cX_expl}. The first part of 
the claim follows by using \eqref{eq:bounds_CFL_omega} in 
\eqref{eq:equiv_norm_cX_expl}.
\end{proof}
We assume in the rest of this section that \eqref{eq:CFL} holds true.
The following discrete counterpart to Theorem~\ref{thm:bnbthm} holds, for 
each $\omega$:
%%%%%%%%%%%%%%%%%%%%%%%%%%%%%%%%%%%%%%%%%%%%%%%%%%%%%%%%%%%%%%%%%%%%%%%
%%%%%%%%%%%%%%%%%%%%%%%%%%%%%%%%%%%%%%%%%%%%%%%%%%%%%%%%%%%%%%%%%%%%%%%
%%%%%%%%%%%%%%%%%%%%%%%%%%%%%%%%%%%%%%%%%%%%%%%%%%%%%%%%%%%%%%%%%%%%%%%
%%
%%	B N B    D I S C R E T E 
%%
%%%%%%%%%%%%%%%%%%%%%%%%%%%%%%%%%%%%%%%%%%%%%%%%%%%%%%%%%%%%%%%%%%%%%%%
%%%%%%%%%%%%%%%%%%%%%%%%%%%%%%%%%%%%%%%%%%%%%%%%%%%%%%%%%%%%%%%%%%%%%%%
%%%%%%%%%%%%%%%%%%%%%%%%%%%%%%%%%%%%%%%%%%%%%%%%%%%%%%%%%%%%%%%%%%%%%%%

\begin{theorem}\label{thm:bnbthm_fd}
The bilinear form appearing in \eqref{eq:discrete_form_q} satisfies the 
following:
\begin{align}
&C_{B}^{h}:=\sup_{0\neq Y \in \ctiiY }\sup_{0\neq X \in \ctiiX_{0,\{T\}}} 
\frac{\sB_{\omega}^*(Y,X)}{ \aabs{\cY_{\omega}}{Y} \aabs{\ctiiXo}{X}} = 
1,\quad 
\bP\mbox{\rm{-a.s.}}, \label{eq:bdd_fd}\\
&c_{B}^{h}:=\inf_{0\neq Y \in \ctiiY }\sup_{0\neq X \in \ctiiX_{0,\{T\}}} 
\frac{\sB_{\omega}^*(Y,X)}{
\aabs{\cY_{\omega}}{Y} \aabs{\ctiiXo}{X}  } = 1,\quad 
\bP\mbox{\rm{-a.s.}},\label{eq:infsup_fd}\\
&\forall X\in \ctiiX_{0,\{T\}}  \qquad \sup_{0\neq Y \in 
\ctiiY} \sB_{\omega}^*(Y,X)>0, \quad 
\bP\mbox{\rm{-a.s.}}\label{eq:infsup2_fd}
\end{align}
\end{theorem}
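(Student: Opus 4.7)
The plan is to mirror the proof of Theorem~\ref{thm:bnbthm} at the discrete level, letting the projection $\Pi^{(q)}_i$ absorb the mismatch between the time-degrees of $\ctiiY$ and $\ctiiX_{0,\{T\}}$. For $X \in \ctiiX_{0,\{T\}}$ I set
\begin{equation*}
\eta_X := -\Ao^{-\frac12}\dot{X} + \Ao^{\frac12}\Pi^{(q)}X \in L^2((0,T);H),
\end{equation*}
where $\Pi^{(q)}X$ denotes the piecewise application of $\Pi^{(q)}_i$ on each $I_i$. Two identities drive the argument. Since $\dot X|_{I_i}$ is a time-polynomial of degree $\leq q$ one has $\Pi^{(q)}_i\dot X = \dot X$, so self-adjointness of $\Pi^{(q)}_i$ in $L^2(I_i;H)$ combined with a telescoping sum across the grid nodes (using continuity of $X$ and $X(T)=0$) yields
\begin{equation*}
\int_0^T \inner{\dot X}{\Pi^{(q)} X}\,\dd t = \int_0^T \inner{\dot X}{X}\,\dd t = -\tfrac{1}{2}\norm{H}{X(0)}^2,
\end{equation*}
so that expanding $\norm{L^2((0,T);H)}{\eta_X}^2$ produces the isometry $\norm{L^2((0,T);H)}{\eta_X}^2 = \aabs{\ctiiXo}{X}^2$. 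Separately, for $Y \in \ctiiY$ the factor $(I-\Pi^{(q)}_i)X$ is $L^2(I_i;H)$-orthogonal to every time-polynomial of degree $\leq q$, hence $\int_{I_i} a(\omega;Y,X)\,\dd t = \int_{I_i} a(\omega;Y,\Pi^{(q)}_i X)\,\dd t$; together with $\tdual{Y}{-\dot X} = \inner{\Ao^{\frac12}Y}{-\Ao^{-\frac12}\dot X}$ this gives
\begin{equation*}
\sB_\omega^*(Y,X) = \int_0^T \inner{\Ao^{\frac12} Y}{\eta_X}\,\dd t,
\end{equation*}
whence Cauchy--Schwarz delivers $|\sB_\omega^*(Y,X)| \leq \aabs{\cY_\omega}{Y}\,\aabs{\ctiiXo}{X}$, establishing \eqref{eq:bdd_fd}.

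For \eqref{eq:infsup_fd} I would build a supremizer. Given $Y \in \ctiiY$, I look for $X_Y \in \ctiiX_{0,\{T\}}$ solving $\eta_{X_Y} = \Ao^{\frac12} Y$ in $L^2((0,T);H)$, i.e.,
\begin{equation*}
\int_0^T \bigl[\tdual{Z}{-\dot X_Y} + a(\omega; Z, \Pi^{(q)} X_Y)\bigr]\,\dd t = \int_0^T a(\omega; Z, Y)\,\dd t, \qquad \forall Z \in \ctiiY.
\end{equation*}
Granted existence, the two identities above yield $\sB_\omega^*(Y,X_Y) = \aabs{\cY_\omega}{Y}^2 = \aabs{\cY_\omega}{Y}\aabs{\ctiiXo}{X_Y}$, so the ratio in \eqref{eq:infsup_fd} equals $1$, and combined with the bound just proved we conclude $c_B^h = C_B^h = 1$.

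The principal obstacle is the construction of $X_Y$, which I would carry out by backward time-marching initialised at $X_Y(t_N)=0$. On each $I_i$, with $i$ decreasing from $N-1$ to $0$ and $X_Y(t_{i+1})$ fixed by the previous step, I solve a square linear system for $X_Y|_{I_i} \in \cP_{q+1}(I_i)\otimes V_h$ against test functions $Z \in \cP_q(I_i)\otimes V_h$ extended by zero outside $I_i$ (legitimate because $Q_{k,q}$ is discontinuous in time). Both local spaces have dimension $(q+1)\dim V_h$, so it suffices to verify injectivity: if the right-hand side vanishes and $X_Y(t_{i+1})=0$, the choice $Z = \Pi^{(q)}_i X_Y|_{I_i}$ combined with the local version of the first identity gives
\begin{equation*}
0 = \tfrac{1}{2}\norm{H}{X_Y(t_i)}^2 + \int_{I_i} \norm{H}{\Ao^{\frac12} \Pi^{(q)}_i X_Y(t)}^2 \,\dd t,
\end{equation*}
so that $\Pi^{(q)}_i X_Y \equiv 0$ on $I_i$ and $X_Y(t_i)=0$; testing next with $Z = \dot X_Y \in \cP_q(I_i)\otimes V_h$ forces $\dot X_Y \equiv 0$ on $I_i$, and combined with $X_Y(t_{i+1})=0$ this gives $X_Y|_{I_i}=0$, proving injectivity and thus solvability on each interval.

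Finally, a dimension count gives $\dim \ctiiY = \dim \ctiiX_{0,\{T\}} = N(q+1)\dim V_h$, so once \eqref{eq:infsup_fd} is established the operator associated with $\sB_\omega^*$ is an injection between finite-dimensional spaces of equal dimension, hence a bijection; the surjectivity condition \eqref{eq:infsup2_fd} then follows automatically.
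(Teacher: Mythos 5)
The first half of your argument is sound and is exactly the right structural observation: with $\eta_X:=-\Ao^{-\frac12}\dot X+\Ao^{\frac12}\Pi^{(q)}X$ one indeed has $\sB_{\omega}^*(Y,X)=\int_0^T\inner{\Ao^{\frac12}Y(t)}{\eta_X(t)}\,\dd t$ and $\norm{L^2((0,T);H)}{\eta_X}=\aabs{\ctiiXo}{X}$, hence $C_B^h\le 1$; your local injectivity/solvability argument for the backward marching is also fine, as is the concluding dimension count for \eqref{eq:infsup2_fd} once a positive inf-sup constant is available. The gap is in the supremizer step. The strong identity $\eta_{X_Y}=\Ao^{\frac12}Y$ and the Petrov--Galerkin system you introduce after ``i.e.''\ are not equivalent: testing against $Z\in\ctiiY$ only controls the component of the residual in $Q_{k,q}\otimes\Ao^{\frac12}V_h$, whereas $\eta_{X_Y}-\Ao^{\frac12}Y$ contains the term $\Ao^{-\frac12}\dot X_Y$, which takes values in $\Ao^{-\frac12}V_h$. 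Your time-marching produces the solution of the weak system, and for that $X_Y$ one only gets $\sB_{\omega}^*(Y,X_Y)=\aabs{\cY_{\omega}}{Y}^2$ together with $\aabs{\ctiiXo}{X_Y}^2=\aabs{\cY_{\omega}}{Y}^2+\norm{L^2((0,T);H)}{\eta_{X_Y}-\Ao^{\frac12}Y}^2$, i.e.\ a ratio strictly below one unless the strong equation happens to hold. But the strong equation, rewritten as $-\Ao^{-1}\dot X_Y+\Pi^{(q)}X_Y=Y$, forces $\Ao^{-1}\dot X_Y(t)\in V_h$, so it is solvable in $\ctiiX_{0,\{T\}}$ only if $V_h$ is invariant under $\Ao^{-1}$ (e.g.\ spanned by eigenfunctions); for a generic finite element space no supremizer attains the value $1$. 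A minimal example: $N=1$, $q=0$, $V_h=\operatorname{span}\{\phi\}$ with $\norm{H}{\phi}=1$ and $\phi$ not an eigenvector of $\Ao$; the only available ratio then satisfies $\big(\sB_{\omega}^*(Y,X)/(\aabs{\cY_{\omega}}{Y}\aabs{\ctiiXo}{X})\big)^2=(1+Ta+\tfrac{T^2}{4}a^2)/(ab+Ta+\tfrac{T^2}{4}a^2)<1$, where $a=\norm{H}{\Ao^{1/2}\phi}^2$, $b=\norm{H}{\Ao^{-1/2}\phi}^2$ and $ab>1$ by Cauchy--Schwarz.

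This example also shows that with $\norm{H}{\Ao^{-\frac12}\dot X(t)}$ read literally in \eqref{eq:mod_norm} the equalities \eqref{eq:bdd_fd}--\eqref{eq:infsup_fd} cannot hold for general $V_h$, so the term must be understood in its discrete form, in the same way as $\norm{V^*}{\cdot}$ is replaced by $\norm{V_h^*}{\cdot}$ in \S~\ref{subsec:semidiscrete} and as in the references the paper points to (the proof is omitted there, with a reference ``modified in the spirit of \S~\ref{subsec:inf_sup_alt}''): replace $\norm{H}{\Ao^{-\frac12}\dot X(t)}$ by $\sup_{w_h\in V_h}\inner{\dot X(t)}{w_h}/\norm{H}{\Ao^{\frac12}w_h}=\norm{H}{A_{h,\omega}^{-\frac12}\dot X(t)}$, where $A_{h,\omega}\colon V_h\to V_h$ is defined by $\inner{A_{h,\omega}v_h}{w_h}=a(\omega;v_h,w_h)$. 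With that norm your strategy closes verbatim: since $\norm{H}{\Ao^{\frac12}v_h}=\norm{H}{A_{h,\omega}^{\frac12}v_h}$ for $v_h\in V_h$, the representation of $\sB_{\omega}^*$ and the isometry hold with $A_{h,\omega}$ in place of $\Ao$, the supremizer equation becomes $-\dot X_Y+A_{h,\omega}\Pi^{(q)}X_Y=A_{h,\omega}Y$ with $X_Y(T)=0$, which is posed entirely inside $Q_{k,q}\otimes V_h$ so that weak and strong forms coincide, and it is exactly what your backward time-marching constructs. As written, however, the proposal proves only $C_B^h\le 1$ and a positive, but not unit, inf-sup bound.
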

The proof of this is omitted, since once $\omega$ is fixed, we can refer 
for example to \cite{UrbanPatera} or \cite{UrbanPatera2}, modified in the 
spirit of \S~\ref{subsec:inf_sup_alt}.
Similarly to the continuous case, and by 
using Lemma~\ref{lemma:cfl_omega}, we can 
thus prove that there exists a unique solution to problem 
\eqref{eq:discrete_form_q}, and that its norm satisfies the following:
\begin{equation}\label{eq:norm_sol_discrete}
\begin{aligned}
\Am \norm{ L^2((0,T);V) }{U}^2   &\leq
 C\big(1 + \AM^2 c^2_S\big)\Am^{-1} \norm{L^2((0,T);V^*)}{f}^2 
+ \norm{H}{u_0}^2,\quad 
\bP\mbox{\rm{-a.s.}},
\end{aligned}
\end{equation}
which is
\begin{equation}\label{eq:normboundAS_nt2_dis}
\begin{aligned}
\norm{L^2((0,T);V)}{U}^2 &\leq C\big( \Am^{-2} + \rho^2 
c^2_S\big) \norm{L^2((0,T);V^*)}{f}^2 
+  \Am^{-1} \norm{H}{u_0}^2, \quad \bP\mbox{\rm{-a.s.}}
\end{aligned}
\end{equation}

The results of existence and uniqueness, and the bounds obtained for the 
bilinear form in Theorems~\ref{thm:bnbthm} and \ref{thm:bnbthm_fd} can be used 
to derive results of quasi-optimality for the error $u-U$. We start by showing 
a sharp connection between the quasi-optimality constant and the random 
variable $c_{S,\omega}$ appearing in Lemma \ref{lemma:cfl_omega}.
\begin{theorem}
The quasi-optimality constant $q_S$ with respect to $\|\cdot\|_{\cY_{\omega}}$ 
for the 
Petrov-Galerkin discretization introduced 
in this section is a random variable, defined for every $\omega$ as the 
smallest $q_S$ for which the 
following inequality holds:
\begin{align}\label{eq:quasi_optimality}
\norm{\cY_{\omega}}{u-U}\leq q_S(\omega) \inf_{ Y \in \ctiiY 
}\norm{\cY_{\omega}}{u-Y}.
\end{align}
For every $\omega$, it satisfies the following 
bound:
\begin{align}
q_S \leq \sqrt{1+c_{S,\omega}^2} 
\leq C \max\{\AM,1\}\sqrt{1+c_{S}^2}.
\end{align}
\end{theorem}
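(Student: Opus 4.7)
The plan is to apply the Xu--Zikatanov identity to the Ritz projection onto $\ctiiY$, in exact analogy with the semidiscrete argument of Theorem~\ref{thm:quasi_optimality_sd}, but carried out throughout in the weighted norms of \S~\ref{subsec:inf_sup_alt}. For fixed $\omega$, define the Ritz projection $R_h\colon \cY \to \ctiiY$ by
\[
\sB_{\omega}^*(R_h u, X) = \sB_{\omega}^*(u, X) \qquad \forall\, X \in \ctiiX_{0,\{T\}},
\]
so that $U = R_h u$ by \eqref{eq:discrete_form_q}. Galerkin orthogonality then gives $u - U = (I-R_h)(u-Y)$ for every $Y \in \ctiiY$, hence
\[
\aabs{\cY_{\omega}}{u-U} \le \|I-R_h\|\,\inf_{Y \in \ctiiY}\aabs{\cY_{\omega}}{u-Y},
\]
where $\|\cdot\|$ denotes the operator norm on $\cY$ equipped with $\aabs{\cY_{\omega}}{\cdot}$. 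As $R_h$ is a non-trivial idempotent on a Hilbert space, $\|I-R_h\|=\|R_h\|$ by the Xu--Zikatanov identity \cite{XuZik}, so it suffices to estimate $\|R_h\|$.

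The plan for this estimate is to chain three ingredients. First, apply the discrete inf-sup condition \eqref{eq:infsup_fd} to $R_h u \in \ctiiY$ together with Galerkin orthogonality:
\[
\aabs{\cY_{\omega}}{R_h u} \le \sup_{0\neq X\in \ctiiX_{0,\{T\}}} \frac{\sB_{\omega}^*(R_h u,X)}{\aabs{\ctiiXo}{X}} = \sup_{0\neq X\in \ctiiX_{0,\{T\}}} \frac{\sB_{\omega}^*(u,X)}{\aabs{\ctiiXo}{X}}.
\]
Next, bound the numerator by the continuous continuity estimate \eqref{eq:bdd}, whose constant is $1$ in the weighted norms: $\sB_{\omega}^*(u,X) \le \aabs{\cY_{\omega}}{u}\,\aabs{\cX_{\omega}}{X}$. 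Finally, convert the test-space norm by Lemma~\ref{lemma:cfl_omega}, precisely through the explicit inequality \eqref{eq:equiv_norm_cX_expl}, which yields $\aabs{\cX_{\omega}}{X} \le \sqrt{1+c_{S,\omega}^2}\,\aabs{\ctiiXo}{X}$. Combining the three steps produces $\|R_h\|\le \sqrt{1+c_{S,\omega}^2}$, which is the first bound claimed for $q_S$.

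The second inequality follows from the explicit estimate $c_{S,\omega}^2 \le \tfrac{\AM^2}{12}c_S^2$ already established in Lemma~\ref{lemma:cfl_omega}: an elementary case distinction on whether $\AM \gtrless 1$ gives $\sqrt{1+\tfrac{\AM^2}{12}c_S^2} \le C\max\{\AM,1\}\sqrt{1+c_S^2}$ with a universal constant $C$. The genuinely delicate step in the plan is the third one, namely the passage from the discrete test-space norm $\aabs{\ctiiXo}{\cdot}$ in the inf-sup condition to the continuous norm $\aabs{\cX_{\omega}}{\cdot}$ demanded by continuity; this is exactly where the CFL-type factor $\AM c_S$ enters, and it is the structural reason why the fully-discrete quasi-optimality constant carries the extra $\AM$ compared with the semidiscrete bound of Theorem~\ref{thm:quasi_optimality_sd}. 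The remainder of the argument is a routine transcription of the inf-sup quasi-optimality machinery to the $\omega$-dependent weighted norms.
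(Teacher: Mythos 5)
Your proof is correct and follows essentially the same route as the paper: both arguments reduce the quasi-optimality constant to the ratio of the mixed continuity constant over the discrete inf-sup constant $c_{B}^{h}=1$, and both extract the factor $\sqrt{1+c_{S,\omega}^2}$ from Lemma~\ref{lemma:cfl_omega} by splitting $\aabs{\cX_{\omega}}{X}\le\sqrt{1+c_{S,\omega}^2}\,\aabs{\ctiiXo}{X}$ before invoking the continuity constant $C_B=1$ of Theorem~\ref{thm:bnbthm}. The only difference is presentational: where the paper cites \cite[Corollary 1.3]{Fra} for the abstract bound $q_S\le C_{\cY\times\cX^{h,k,q+1}_{0,\{T\}}}/c_{B}^{h}$, you re-derive that bound through the Ritz projection and the Xu--Zikatanov identity, exactly as in the paper's own semidiscrete Theorem~\ref{thm:quasi_optimality_sd}.
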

\begin{proof}
The quasi-optimality constant $q_S$ is bounded as
follows (see for example \cite[Corollary 1.3]{Fra}):
\begin{equation}\label{doubleboundq_S}
q_S \leq
\frac{C_{\cY \times \cX^{h,k,q+1}_{0,\{T\}}}}{c_{B}^{h}}.
\end{equation}
Here $C_{\cY \times \cX^{h,k,q+1}_{0,\{T\}}}$ denotes the continuity constant of
 $\sB_{\omega}^*(\cdot,\cdot)$ on $ \cY \times \cX^{h,k,q+1}_{0,\{T\}}$, with 
the 
latter space endowed with the $\aabs{\ctiiXo}{\cdot}$-norm.
Since we already know that $c_{B}^{h} = 1$, in order to prove the claim it 
suffices to bound 
$C_{\cY 
\times \cX^{h,k,q+1}_{0,\{T\}}}$:
\begin{align*}
C_{\cY \times \cX^{h,k,q+1}_{0,\{T\}}} &= \sup_{y\in\cY}\sup_{X \in 
\cX^{h,k,q+1}_{0,\{T\}}
} \frac{\sB_{\omega}^*(y,X)}{ \aabs{\cY_{\omega}}{y} \aabs{\ctiiXo}{X}}\\
&\leq \Big( \sup_{X \in
\cX^{h,k,q+1}_{0,\{T\}}}\frac{\aabs{\cX_{\omega}}{X}\hfill}{\aabs{\ctiiXo}{X}}  
\Big) \Big( \sup_{y\in\cY}\sup_{X \in
\cX^{h,k,q+1}_{0,\{T\}}}\frac{\sB_{\omega}^*(y,X)}{
\aabs{\cY_{\omega}}{y} \aabs{\cX_{\omega}}{X}  }  
\Big)\\
&\leq C_{B} \sqrt{1+c_{S,\omega}^2}.
\end{align*}
By inserting this last bound in Equation~\eqref{doubleboundq_S}, and 
using Lemma~\ref{lemma:cfl_omega}, we obtain:
\begin{equation*}%\label{finalvaluesq_S1}
q_S \leq \frac{ C_{B} }{c_{B}^{h}} \sqrt{1+c_{S,\omega}^2} \leq C 
\max\{\AM,1\}\sqrt{1+c_{S}^2},
\end{equation*}
which proves the claim.
\end{proof}
This leads to the following result of quasi-optimality:
\begin{theorem}\label{thm:quasi_optimality_fd}
The Petrov-Galerkin method introduced above is quasi-optimal and satisfies the 
estimate
\begin{equation}\label{quasioptimalityEstimate_fd}
\aabs{\cY_{\omega}}{u - U} \leq \sqrt{1+c_{S,\omega}^2}
\inf_{Y\in\ctiiY}{\aabs{\cY_{\omega}}{u-Y}}, \quad 
\bP\mbox{\rm{-a.s.}},
\end{equation}
and hence:
\begin{equation}\label{quasioptimalityEstimate_fd1}
\norm{\cY}{u - U} \leq C \rho \sqrt{1+\AM^2 c_S^2} 
\inf_{Y\in\ctiiY}{\norm{\cY}{u-Y}},\quad 
\bP\mbox{\rm{-a.s.}}
\end{equation}
\end{theorem}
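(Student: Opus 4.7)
The first inequality is essentially a restatement of the preceding theorem. Since the Petrov--Galerkin solution $U$ satisfies the Galerkin orthogonality $\sB_{\omega}^*(u-U,X)=0$ for every $X\in\ctiiX_{0,\{T\}}$, the definition of the quasi-optimality constant $q_S$ in \eqref{eq:quasi_optimality} together with the bound $q_S \leq \sqrt{1+c_{S,\omega}^2}$ just established gives, $\bP$-a.s.,
\[
\aabs{\cY_{\omega}}{u-U} \leq q_S \inf_{Y\in\ctiiY} \aabs{\cY_{\omega}}{u-Y} \leq \sqrt{1+c_{S,\omega}^2}\,\inf_{Y\in\ctiiY} \aabs{\cY_{\omega}}{u-Y}.
\]
So \eqref{quasioptimalityEstimate_fd} is essentially free, and the only content left is the passage to the $\omega$-independent norm $\norm{\cY}{\cdot}$ in \eqref{quasioptimalityEstimate_fd1}.

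For that passage my plan is to use the pointwise norm equivalence that follows from the self-adjointness of $A_\omega$ together with the coercivity/boundedness bounds $\Am$ and $\AM$. Namely, for every $y\in\cY$ and $\bP$-a.s.
\[
\Am \norm{\cY}{y}^2 \leq \aabs{\cY_{\omega}}{y}^2 \leq \AM \norm{\cY}{y}^2,
\]
which is obtained by recognizing $\norm{H}{\Ao^{\frac12}(t)y(t)}^2 = a(t,\omega;y(t),y(t))$ and integrating in $t$. I apply the lower bound on the left-hand side of \eqref{quasioptimalityEstimate_fd} and the upper bound inside the infimum on the right-hand side, obtaining
\[
\norm{\cY}{u-U}^2 \leq \Am^{-1}\aabs{\cY_{\omega}}{u-U}^2 \leq \Am^{-1}\AM (1+c_{S,\omega}^2)\inf_{Y\in\ctiiY} \norm{\cY}{u-Y}^2 = \rho(1+c_{S,\omega}^2)\inf_{Y\in\ctiiY} \norm{\cY}{u-Y}^2.
\]

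To finish, I use Lemma~\ref{lemma:cfl_omega}, which gives $c_{S,\omega}^2 \leq \frac{\AM^2}{12}c_S^2$, together with the trivial observation that $\rho \geq 1$ (since $\AM \geq \Am$), so $\sqrt{\rho}\leq \rho$. Taking square roots and absorbing $1/\sqrt{12}$ into the universal constant $C$ then yields
\[
\norm{\cY}{u-U} \leq \sqrt{\rho}\sqrt{1+\tfrac{\AM^2}{12}c_S^2}\,\inf_{Y\in\ctiiY}\norm{\cY}{u-Y} \leq C\rho\sqrt{1+\AM^2 c_S^2}\,\inf_{Y\in\ctiiY}\norm{\cY}{u-Y},
\]
which is \eqref{quasioptimalityEstimate_fd1}.

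There is no real obstacle: all the genuine analysis (the BNB argument on the discrete spaces with inf-sup constant equal to one, the estimate $C_{\cY\times\ctiiX_{0,\{T\}}}\leq\sqrt{1+c_{S,\omega}^2}$, and the bound on $c_{S,\omega}$) has been carried out in Theorems~\ref{thm:bnbthm_fd} and the preceding quasi-optimality theorem together with Lemma~\ref{lemma:cfl_omega}. The one subtlety to keep track of is where $\AM$, $\rho$ and $\sqrt{\rho}$ each enter; the mild non-tightness (we obtain $\sqrt{\rho}$ but record $\rho$) is harmless because $\rho\geq 1$ and is convenient when integrating over $\Omega$ in Section~\ref{sec:Lpest}.
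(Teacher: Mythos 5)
Your proposal is correct and takes essentially the same route the paper intends: \eqref{quasioptimalityEstimate_fd} is just the bound $q_S\leq\sqrt{1+c_{S,\omega}^2}$ from the preceding theorem, and \eqref{quasioptimalityEstimate_fd1} follows from the norm equivalence $\Am\norm{\cY}{y}^2\leq\aabs{\cY_{\omega}}{y}^2\leq\AM\norm{\cY}{y}^2$ together with $c_{S,\omega}^2\leq\tfrac{\AM^2}{12}c_S^2$ from Lemma~\ref{lemma:cfl_omega}. Your intermediate factor $\sqrt{\rho}$ is in fact slightly sharper than the stated $\rho$, and passing to $\rho$ via $\rho\geq 1$ is exactly the harmless relaxation the theorem records.
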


\section{$L^p(\Omega)$-estimates} \label{sec:Lpest}
We start this section by presenting some sufficient conditions to have 
a solution $u$ to \eqref{eq:secondspacetime} (respectively a solution $u_h$ 
to \eqref{eq:semispace} and a solution $U$ to \eqref{eq:discrete_form_q}) 
which belongs to in $L^p(\Omega;\cY)$, $p\geq1$. 

\subsection{$L^p(\Omega)$-estimates for the solution}\label{subsec:Lpest}
Provided the almost sure existence of a solution to \eqref{eq:secondspacetime}, 
we want to give some sufficient condition on the existence of the moments of the
data and of the two random variables $\AM$ and $\Am$, bounding the operator $A$,
such that the $p$-moments of the solutions exist, for some $p\in[1,\infty]$.

%%%%%%%%%%%%%%%%%%%%%%%%%%%%%%%%%%%%%%%%%%%%%%%%%%%%%%%%%%%%%%%%%%%%%%%%%%%%%%%
%%%%%%%%%%%%%%%%%%%%%%%%%%%%%%%%%%%%%%%%%%%%%%%%%%%%%%%%%%%%%%%%%%%%%%%%%%%%%%%
%%%%%%%%%%%%%%%%%%%%%%%%%%%%%%%%%%%%%%%%%%%%%%%%%%%%%%%%%%%%%%%%%%%%%%%%%%%%%%%
%%
%%		t h e o r e m  
%%
%%%%%%%%%%%%%%%%%%%%%%%%%%%%%%%%%%%%%%%%%%%%%%%%%%%%%%%%%%%%%%%%%%%%%%%%%%%%%%%
%%%%%%%%%%%%%%%%%%%%%%%%%%%%%%%%%%%%%%%%%%%%%%%%%%%%%%%%%%%%%%%%%%%%%%%%%%%%%%%
%%%%%%%%%%%%%%%%%%%%%%%%%%%%%%%%%%%%%%%%%%%%%%%%%%%%%%%%%%%%%%%%%%%%%%%%%%%%%%%

\begin{theorem}\label{thm:Lptheorem2}
Under the assumptions of Theorem \ref{thm:bnbthm} and if
\begin{enumerate}
 \item $ f \in L^{\alpha}(\Omega;L^2((0,T);V^*))$, 
 \item $u_0 \in L^{\beta}(\Omega;H)$,
 \item ${\Am^{-1}} \in L^{\gamma}(\Omega;\bR)$,
\end{enumerate}
for some exponents $\alpha$, $\gamma$, $\beta \geq1$, such that  $p:=  \min \{ 
\frac{\alpha\gamma}{\alpha + \gamma} , \frac{2\beta\gamma}{\beta + 2\gamma}\} 
\geq 1$, then both $u$, solution to \eqref{eq:secondspacetime}, and $u_h$, 
solution to \eqref{eq:semispace}, belong to
$L^{p}(\Omega;\cY)$. The same holds for $U$, solution to 
\eqref{eq:discrete_form_q}, if we assume that there is a constant $\RM$ 
such that
\begin{align}\label{eq:assumption_ratio}
\RM := \sup_{\omega \in \Omega}\abs{\rho(\omega)} < \infty.
\end{align}
\end{theorem}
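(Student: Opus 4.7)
The plan is to lift the pathwise energy estimates already established in Sections~\ref{sec:formAS} and~\ref{sec:quasi_optimality} to $L^p(\Omega)$ by applying H\"older's inequality $\omega$-wise and then integrating. For the continuous solution $u$, and identically for the semidiscrete solution $u_h$, bound \eqref{eq:normboundAS_nt2} together with the elementary inequality $\sqrt{a+b}\le\sqrt{a}+\sqrt{b}$ yields
\begin{equation*}
\norm{\cY}{u(\cdot,\omega)}\le \Am^{-1}(\omega)\,\norm{\cY^*}{f(\cdot,\omega)}+\Am^{-1/2}(\omega)\,\norm{H}{u_0(\omega)},\qquad \bP\text{-a.s.}
\end{equation*}
Taking $L^p(\Omega)$-norms and using Minkowski's inequality reduces the problem to estimating the two products on the right-hand side separately.

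For the forcing term I would apply H\"older's inequality with conjugate exponents $\alpha$ and $\gamma$, which gives $\|\Am^{-1}\norm{\cY^*}{f}\|_{L^p(\Omega)}\le \|\Am^{-1}\|_{L^\gamma(\Omega)}\,\|f\|_{L^\alpha(\Omega;\cY^*)}$ under the restriction $\tfrac{1}{p}\ge\tfrac{1}{\alpha}+\tfrac{1}{\gamma}$, that is, $p\le\frac{\alpha\gamma}{\alpha+\gamma}$. For the initial-data term I first note that $\|\Am^{-1/2}\|_{L^{2\gamma}(\Omega)}^{2\gamma}=\EE[\Am^{-\gamma}]<\infty$ by hypothesis; H\"older with exponents $2\gamma$ and $\beta$ then yields the analogous estimate under the constraint $p\le\frac{2\beta\gamma}{\beta+2\gamma}$. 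Taking the minimum of the two admissible ranges gives exactly the $p$ stated in the theorem, and the same argument applies verbatim to $u_h$.

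For the fully discrete solution $U$ the only new ingredient is the additional summand $C\rho^2 c_S^2\,\norm{\cY^*}{f}^2$ appearing in \eqref{eq:normboundAS_nt2_dis}. Under the uniform hypothesis \eqref{eq:assumption_ratio}, $\rho(\omega)$ is deterministically dominated by $\RM$, and after extracting a square root the extra contribution to $\|\norm{\cY}{U}\|_{L^p(\Omega)}$ is controlled by $\sqrt{C}\,\RM\, c_S\,\|f\|_{L^p(\Omega;\cY^*)}$. Since $\gamma\ge 1$ forces $p\le\frac{\alpha\gamma}{\alpha+\gamma}\le\alpha$, and $\Omega$ is a probability space, the embedding $L^\alpha(\Omega)\hookrightarrow L^p(\Omega)$ absorbs this term into $\|f\|_{L^\alpha(\Omega;\cY^*)}$ without requiring additional moment assumptions on $f$. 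The remaining two summands are treated exactly as for $u$.

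The only real obstacle is the bookkeeping of H\"older exponents needed to verify that the minimum expression in the statement is sharp, and the observation that \eqref{eq:assumption_ratio} suffices to absorb the extra $\rho^2 c_S^2$-term in the fully discrete estimate into $L^\alpha$ (and hence into $L^p$) without strengthening the integrability assumed on $f$ --- which is precisely why \eqref{eq:assumption_ratio} is invoked only for $U$.
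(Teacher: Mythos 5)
Your proposal is correct and follows essentially the same route as the paper's proof: the pathwise bound \eqref{eq:normboundAS_nt2} split into the $\Am^{-1}f$ and $\Am^{-1/2}u_0$ terms, Hölder's inequality over $\Omega$ with exponents chosen so that $p\le\frac{\alpha\gamma}{\alpha+\gamma}$ and $p\le\frac{2\beta\gamma}{\beta+2\gamma}$, and the probability-space embedding to take the minimum. Your treatment of $U$ is just a slightly more explicit version of the paper's remark that the extra factor $\rho c_S\le\RM c_S$ introduced by \eqref{eq:normboundAS_nt2_dis} is deterministic under \eqref{eq:assumption_ratio} and hence harmless.
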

\begin{proof}
The solutions $u_h$, $U$, and $u$ all satisfy the same bounds, up to a factor 
$\rho(\omega)c_S$ in the case of $U$. Since we assume 
\eqref{eq:assumption_ratio}, we have
$\rho(\omega)c_S \leq \RM c_S$, which does not depend on $\omega$ and we 
therefore only need to prove this theorem for $u$. By means of H\"{o}lder's 
inequality and \eqref{eq:normboundAS_nt2}, 
we have that:
\begin{equation}\label{eq:estimate1_2}
\begin{aligned}
\norm{L^p(\Omega;L^2((0,T);V))}{u} &\leq
\norm{L^p(\Omega;L^2((0,T);V^*))}{ \Am^{-1} f} 
+
\norm{L^p(\Omega;H)}{ \Am^{-\frac12} u_0}
.
\end{aligned}
\end{equation}
If we focus on the first term of the right-hand side, we have:
\begin{align}
\norm{L^p(\Omega;L^2((0,T);V))}{ \Am^{-1} f} 
\leq 
\norm{L^{pq}(\Omega;\bR)}{ \Am^{-1} } 
\norm{L^{pq'}(\Omega;L^2((0,T);V^*))}{f},
\end{align}
for any pair of dual H\"{o}lder exponents $q$ and $q'$. If we choose $q'= 1 + 
\frac{\alpha}{\gamma}$ and
$q =  1 + \frac{\gamma}{\alpha} $, we notice that for $p:= 
\frac{\alpha\gamma}{\alpha + \gamma}$ we have $pq'=\alpha $ and $pq 
=\gamma$, so that \eqref{eq:estimate1_2} gives:
\begin{align}\label{eq:Lp_sol_first_step}
\norm{L^p(\Omega;L^2((0,T);V))}{u}  \leq
\norm{L^{\gamma}(\Omega;\bR)}{ \Am^{-1} } 
\norm{L^{\alpha}(\Omega;L^2((0,T);V^*))}{f} < \infty.
\end{align}
Similarly, the second term of the right hand side gives:
\begin{align*}
\norm{L^p(\Omega;H)}{ \Am^{-\frac12}  u_0} 
&\leq 
\norm{L^{pq}(\Omega;\bR)}{\Am^{-\frac12}}
\norm{L^{pq'}(\Omega;H)}{u_0} =
\norm{L^{\frac{pq}{2}}(\Omega;\bR)}{ \Am^{-1} }^{\frac12} 
\norm{L^{pq'}(\Omega;H)}{u_0}.
\end{align*}
If we choose $q'= 1 + \frac{\beta}{2\gamma}$ and
$q =  1 + \frac{2\gamma}{\beta} $, we notice that for  $p:= 
\frac{2\beta\gamma}{\beta + 2\gamma}$  we have 
$pq'=\beta $ and $\frac{pq}{2} =\gamma$
so that \eqref{eq:estimate1_2} gives:
\begin{align}\label{eq:Lp_sol_second_step}
\norm{L^p(\Omega;H)}{\Am^{-\frac12} 
u_0} \leq 
\norm{L^{\gamma}(\Omega;\bR)}{ \Am^{-1} }^{\frac12} 
\norm{L^{\beta}(\Omega;H)}{u_0} < \infty.
\end{align}
Since $\Omega$ is a probability space, it suffices to take $ p = \min \{ 
\frac{\alpha\gamma}{\alpha + \gamma} , \frac{2\beta\gamma}{\beta + 2\gamma}\}$ 
to have finite quantities both in \eqref{eq:Lp_sol_first_step} and 
\eqref{eq:Lp_sol_second_step}, so that the claim is proved.
\end{proof}

\begin{remark}\label{notesaboutunifbound}
Notice that, if we assume that $\Am^{-1}$ is uniformly bounded,
i.e., that $\gamma=\infty$, we can see from the first step of the proof
that the finiteness of the $p$-moments of the solution $u$ coincides to the one
of the $p$-moments of the initial data $u_0$ and of the load function $f$.
\end{remark}

Despite the fact that the estimates might not be sharp in general, our results 
are of 
relevance because they
allow the analysis of very general elliptic operators of the form
\begin{align}\label{eq:example_operator_omega_time_space}
A(t,\omega)u :=-\partial_i\Big( X_{ij}(t,\omega,\xi) \partial_j u \Big) + 
Y_{j}(t,\omega,\xi)\partial_j u + Z_{j}(t,\omega,\xi) u,
\end{align}
where the coefficients $X_{i,j}$, $Y_j$ and $Z$ are possibly unbounded 
time-dependent 
random fields and where $\xi$ denotes the spatial variable.
However, if we know explicitly how the quantities 
involved depend on $\omega$, a better analysis could be performed case by case, 
using \eqref{eq:normboundAS_nt2} as a starting point. The following 
example helps to 
clarify the possible limitations of the previous theorem.
\begin{example}\label{example:factorization_op}
Suppose that $\Lambda$ is a bounded 
domain 
in $ \bR^n$, $ V = H^1_0(\Lambda)$, $H =L^2(\Lambda)$, $\Omega =[0,1]$, 
$\zeta_0 \neq \zeta_1 \in 
[0,1]$, $\alpha \in (0,1)$, and 
assume that $\bP$ is the Lebesgue measure and that the operator $A$ is of the 
form
\begin{align*}
A(t,\omega) = A(\omega) := -\abs{\omega-\zeta_0}^{\alpha} 
\Delta.
\end{align*}
We take $u_0=0$ and assume that $f$, for some $g\in L^2((0,T); 
H^{-1}(\Lambda))$, is given by
\begin{align*}
f(t,\omega,\xi) :=  \abs{\omega-\zeta_1}^{-\alpha} g(t,\xi), 
\end{align*}
We have that $f \in 
L^p(\Omega;L^2((0,T);V^*))$ and $ \Am^{-1} \in 
L^p(\Omega;\bR)$ for any $ p <\frac{1}{\alpha}$.
For each fixed $\omega \in [0,1]$ the following point-wise estimate holds:
\begin{align*}
\norm{L^2((0,T);H_0^{1}(\Lambda))}{u(\cdot,\omega)} \lesssim  
\abs{\omega-\zeta_0}^{-\alpha} \abs{\omega-\zeta_1}^{-\alpha} 
\norm{L^2((0,T);H^{-1}(\Lambda))}{g}.
\end{align*}
Since the two singularities occur at different points, it is easy to see 
that the left-hand side 
belongs to $L^p(\Omega;\cY)$, for any $ p <\frac{1}{\alpha}$ as well. This is 
a sharper result than what 
Theorem~\ref{thm:Lptheorem2} would ensure, that is, 
$ u \in L^p(\Omega;\cY)$ 
for $ p < \frac{1}{2 \alpha} $.
\end{example}

There are of course many cases of interests where a local analysis as the one 
above is not available, for example when a factorization in a random part and a 
deterministic part is not available, or when $\Omega$ does not have compact 
support and both $\Am^{-1}$ and $f$ are heavy-tailed distributed 
(e.g., for ``power law'' distributions such as Pareto, which are of particular 
importance in finance, or Student's t-distribution).

%%%%%%%%%%%%%%%%%%%%%%%%%%%%%%%%%%%%%%%%%%%%%%%%%%%%%%%%%%%%%%%%%%%%%%
%%%%%%%%%%%%%%%%%%%%%%%%%%%%%%%%%%%%%%%%%%%%%%%%%%%%%%%%%%%%%%%%%%%%%%
%%
%%
%%
%%  E R R O R   E S T I M A T E S    I N     L P
%%
%%
%%%%%%%%%%%%%%%%%%%%%%%%%%%%%%%%%%%%%%%%%%%%%%%%%%%%%%%%%%%%%%%%%%%%%%
%%%%%%%%%%%%%%%%%%%%%%%%%%%%%%%%%%%%%%%%%%%%%%%%%%%%%%%%%%%%%%%%%%%%%%

\subsection{$L^p(\Omega)$-estimates for the semidiscrete 
error}\label{subsec:Lpest_err_sd}
We assume that the data characterizing 
the original problem \eqref{eq:randHeatStrong} is such 
that Theorem~\ref{thm:Lptheorem2} is applicable for a given $p>1$. Moreover, we 
denote 
by $u_h$ the spatial semidiscrete solution to the original problem, as 
in \S~\ref{subsec:semidiscrete}.

%%%%%%%%%%%%%%%%%%%%%%%%%%%%%%%%%%%%%%%%%%%%%%%%%%%%%%%%%%%%%%%%%%%%%%%%%%%%%
%%%%%%%%%%%%%%%%%%%%%%%%%%%%%%%%%%%%%%%%%%%%%%%%%%%%%%%%%%%%%%%%%%%%%%%%%%%%%
%%%%%%%%%%%%%%%%%%%%%%%%%%%%%%%%%%%%%%%%%%%%%%%%%%%%%%%%%%%%%%%%%%%%%%%%%%%%%
%%
%% 			T H E O R E M
%%
%%%%%%%%%%%%%%%%%%%%%%%%%%%%%%%%%%%%%%%%%%%%%%%%%%%%%%%%%%%%%%%%%%%%%%%%%%%%%
%%%%%%%%%%%%%%%%%%%%%%%%%%%%%%%%%%%%%%%%%%%%%%%%%%%%%%%%%%%%%%%%%%%%%%%%%%%%%
%%%%%%%%%%%%%%%%%%%%%%%%%%%%%%%%%%%%%%%%%%%%%%%%%%%%%%%%%%%%%%%%%%%%%%%%%%%%%
%%%%%%%%%%%%%%%%%%%%%%%%%%%%%%%%%%%%%%%%%%%%%%%%%%%%%%%%%%%%%%%%%%%%%%%%%%%%%
\begin{theorem}\label{thm:quasi_optimality_Lp_semi}
If there exist exponents $\alpha,\beta,\gamma,p$ as in the assumptions of 
Theorem~\ref{thm:Lptheorem2}, and if \eqref{eq:c_h_unif_bdd} 
and 
\eqref{eq:assumption_ratio} 
both hold, it follows that
\begin{align*}%\label{quasioptimalityEstimateLpExplicit}
 \norm{L^{p}(\Omega;\cY)}{u - u_h} &\leq c_h (1+\RM) \bE\Big[\inf_{y_h\in
L^p(\Omega;\cY^h)}{\norm{\cY}{u-y_h}^p}
 \Big]^{\frac{1}{p}}.
\end{align*}
\end{theorem}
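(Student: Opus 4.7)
The plan is to combine the $\omega$-wise quasi-optimality estimate of Theorem~\ref{thm:quasi_optimality_sd} with the uniform boundedness of $\rho$ provided by \eqref{eq:assumption_ratio}, so that the only $\omega$-dependence remaining on the right-hand side is that of the best-approximation error. Once this is achieved, the passage from the pointwise estimate to an $L^p(\Omega)$ estimate is just a matter of raising both sides to the $p$-th power and taking expectations, together with a mild measurable-selection argument to reinterpret the pointwise infimum as one over $L^p$-integrable selections.

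More concretely, I would first invoke Theorem~\ref{thm:quasi_optimality_sd} $\bP$-almost surely, which is legitimate since \eqref{eq:c_h_unif_bdd} is part of the hypotheses, to obtain
\[
\norm{\cY}{u(\cdot,\omega) - u_h(\cdot,\omega)} \le c_h(1+\rho(\omega)) \inf_{y_h \in \cY^h} \norm{\cY}{u(\cdot,\omega) - y_h}.
\]
Next I would use \eqref{eq:assumption_ratio} to replace $\rho(\omega)$ by its essential supremum $\RM$, producing the deterministic prefactor $c_h(1+\RM)$. Raising to the $p$-th power and integrating over $\Omega$ yields
\[
\bE\bigl[\norm{\cY}{u - u_h}^p\bigr] \le c_h^p(1+\RM)^p \, \bE\Bigl[\inf_{y_h \in \cY^h} \norm{\cY}{u-y_h}^p\Bigr].
\]
Here $u, u_h \in L^p(\Omega;\cY)$ by Theorem~\ref{thm:Lptheorem2}, so the left-hand side is finite.

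The remaining step is to identify $\bE[\inf_{y_h \in \cY^h}\norm{\cY}{u-y_h}^p]$ with the expression $\bE[\inf_{y_h \in L^p(\Omega;\cY^h)}\norm{\cY}{u-y_h}^p]$ that appears in the statement. For each fixed $\omega$ the inner infimum is attained by the orthogonal projection $\Pi_h u(\cdot,\omega)$ of $u(\cdot,\omega)$ onto the closed subspace $\cY^h \subset \cY$; since $u \in L^p(\Omega;\cY)$ and $\Pi_h$ has operator norm one, the map $\omega \mapsto \Pi_h u(\cdot,\omega)$ is a measurable $L^p$-integrable selection attaining the pointwise infimum. This gives equality of the two infima and concludes the argument.

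The main (mild) technical point is this last measurable-selection step; it is straightforward because $\cY^h$ is a closed subspace of a Hilbert space, so the best approximation depends continuously and hence measurably on $u$. All other ingredients are direct invocations of earlier results in the paper, so I do not expect any substantial obstacle.
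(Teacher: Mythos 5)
Your proposal is correct and follows essentially the same route as the paper: invoke the $\omega$-wise estimate of Theorem~\ref{thm:quasi_optimality_sd}, replace $\rho(\omega)$ by $\RM$ via \eqref{eq:assumption_ratio}, and take $L^p(\Omega)$-norms, using Theorem~\ref{thm:Lptheorem2} to guarantee finiteness of both sides. Your extra measurable-selection step via the orthogonal projection onto $\cY^h$ merely makes rigorous what the paper states informally, namely that one may choose $y_h=y_h(\omega)\in L^p(\Omega;\cY^h)$.
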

\begin{proof}
Theorem~\ref{thm:quasi_optimality_sd} ensures the validity of 
\eqref{eq:quasioptimalityEstimate_sd} for every given $\omega$; in particular 
we 
can choose a different $y_h$ for every $\omega$, so that $y_h = y_h(\omega)$. 
If we 
now take the $L^p(\Omega;\cdot)$-norm at both sides of 
\eqref{eq:quasioptimalityEstimate_sd}, assume that $y_h \in L^p(\Omega;\cY^h)$, 
use \eqref{eq:assumption_ratio} to estimate $\rho$ in terms of an absolute 
constant, and use Theorem \ref{thm:Lptheorem2} to ensure the finiteness of both 
right- and left-hand sides, the claim follows. 
\end{proof}

Despite the fact that Example~\ref{example:factorization_op} suggests that 
our results are not the sharpest one could get, it is worth noticing that under 
relatively general assumption 
we can treat interesting cases not covered so far by other articles on the same 
topic (see, e.g., \cite{SchwabAndreev}).
\begin{example}
If we assume that $A$ is uniformly coercive with 
respect to $\omega$, but not uniformly bounded, and that 
\eqref{eq:assumption_ratio} holds (as for example if $\Am(\omega) \asymp 
\AM(\omega) \asymp 1 + \frac{1}{\abs{\omega}} $, with $\bP$ being the Lebesgue 
measure and $\Omega = [-0.5,0.5]$), an 
application of Theorem \ref{thm:Lptheorem2} ensures that a 
mean-square integrable solution exists, given that the data are mean-square 
integrable as 
well. Furthermore, Theorem \ref{thm:quasi_optimality_Lp_semi} ensures results 
also of 
quasi-optimality in a mean-square sense. The unboundedness of the operator $A$ 
with respect to the parameter/random variable $\omega$ does not affect the 
existence of a semidiscrete solution, nor the quasi-optimality estimates (and 
hence the rate of convergence of the error).
\end{example}

%%%%%%%%%%%%%%%%%%%%%%%%%%%%%%%%%%%%%%%%%%%%%%%%%%%%%%%%%%%%%%%%%%%%%%%%%%%%%%%
%%%%%%%%%%%%%%%%%%%%%%%%%%%%%%%%%%%%%%%%%%%%%%%%%%%%%%%%%%%%%%%%%%%%%%%%%%%%%%%
%%%%%%%%%%%%%%%%%%%%%%%%%%%%%%%%%%%%%%%%%%%%%%%%%%%%%%%%%%%%%%%%%%%%%%%%%%%%%%%
%%
%%
%%
%%
%%%%%%%%%%%%%%%%%%%%%%%%%%%%%%%%%%%%%%%%%%%%%%%%%%%%%%%%%%%%%%%%%%%%%%%%%%%%%%%
%%%%%%%%%%%%%%%%%%%%%%%%%%%%%%%%%%%%%%%%%%%%%%%%%%%%%%%%%%%%%%%%%%%%%%%%%%%%%%%
%%%%%%%%%%%%%%%%%%%%%%%%%%%%%%%%%%%%%%%%%%%%%%%%%%%%%%%%%%%%%%%%%%%%%%%%%%%%%%%

\subsection{$L^p(\Omega)$-estimates for the fully
discrete error}\label{subsec:Lpest_err_fd}
As in the previous subsection, we still assume that the 
data characterizing 
the original problem \eqref{eq:randHeatStrong} are such 
that Theorem~\ref{thm:Lptheorem2} is applicable for a given $p>1$.

%%%%%%%%%%%%%%%%%%%%%%%%%%%%%%%%%%%%%%%%%%%%%%%%%%%%%%%%%%%%%%%%%%%%%%%%%%%%%
%%%%%%%%%%%%%%%%%%%%%%%%%%%%%%%%%%%%%%%%%%%%%%%%%%%%%%%%%%%%%%%%%%%%%%%%%%%%%
%%%%%%%%%%%%%%%%%%%%%%%%%%%%%%%%%%%%%%%%%%%%%%%%%%%%%%%%%%%%%%%%%%%%%%%%%%%%%
%%
%% 			T H E O R E M
%%
%%%%%%%%%%%%%%%%%%%%%%%%%%%%%%%%%%%%%%%%%%%%%%%%%%%%%%%%%%%%%%%%%%%%%%%%%%%%%
%%%%%%%%%%%%%%%%%%%%%%%%%%%%%%%%%%%%%%%%%%%%%%%%%%%%%%%%%%%%%%%%%%%%%%%%%%%%%
%%%%%%%%%%%%%%%%%%%%%%%%%%%%%%%%%%%%%%%%%%%%%%%%%%%%%%%%%%%%%%%%%%%%%%%%%%%%%
%%%%%%%%%%%%%%%%%%%%%%%%%%%%%%%%%%%%%%%%%%%%%%%%%%%%%%%%%%%%%%%%%%%%%%%%%%%%%

\begin{theorem}\label{thm:quasi_optimality_Lp_disc}
If there exist exponents $\alpha,\beta,\gamma,p$ as in the assumptions of 
Theorem~\ref{thm:Lptheorem2}, and if \eqref{eq:CFL} 
and 
\eqref{eq:assumption_ratio} 
both hold, it follows that
\begin{align*}%\label{quasioptimalityEstimateLpExplicit}
 \norm{L^{\bar{p}}(\Omega;\cY)}{u - U} &\lesssim c_S \RM 
\bE[\AM^{\theta}]^{\frac{1}{\theta}} \bE\Big[\inf_{Y \in
L^p(\Omega;\ctiiY )}{\norm{\cY}{u-Y}^p}
 \Big]^{\frac{1}{p}},
\end{align*}
where $\AM \in L^{\theta}(\Omega;\bR)$, $\bar{p} = p - \frac{p^2}{\theta + p}$, 
and $\theta,\bar{p} \geq1$.
\end{theorem}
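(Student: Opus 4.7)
The plan is to promote the pathwise quasi-optimality estimate from Theorem~\ref{thm:quasi_optimality_fd} to an $L^{\bar p}(\Omega)$-bound via H\"older's inequality, choosing the conjugate exponents so that the random factor $\AM$ sits in $L^\theta(\Omega)$ and the best-approximation factor sits in $L^p(\Omega)$.

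I would start by fixing $\omega$ and combining \eqref{quasioptimalityEstimate_fd1} with the uniform bound $\rho(\omega)\le\RM$ from \eqref{eq:assumption_ratio} and the elementary estimate $\sqrt{1+\AM^2 c_S^2}\le 1+\AM c_S$, obtaining
\begin{equation*}
\|u(\cdot,\omega)-U(\cdot,\omega)\|_{\cY}
\lesssim \RM\bigl(1+\AM(\omega)c_S\bigr)\inf_{Y\in\ctiiY}\|u(\cdot,\omega)-Y\|_{\cY},
\quad \bP\text{-a.s.}
\end{equation*}
Raising both sides to the $\bar p$-th power and integrating over $\Omega$, I would then apply H\"older's inequality with a pair of conjugate exponents $(r,r')$, $1/r+1/r'=1$, selected so that $\bar p r=\theta$ and $\bar p r'=p$. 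The H\"older constraint forces
\begin{equation*}
\frac{\bar p}{\theta}+\frac{\bar p}{p}=1,\qquad\text{i.e.,}\qquad \bar p=\frac{p\,\theta}{p+\theta}=p-\frac{p^2}{\theta+p},
\end{equation*}
which is exactly the exponent appearing in the statement.

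A subsequent application of Minkowski's inequality yields $\|1+\AM c_S\|_{L^\theta(\Omega)}\le 1+c_S\,\bE[\AM^\theta]^{1/\theta}$, and this is absorbed into $c_S\,\bE[\AM^\theta]^{1/\theta}$ by the constant hidden in $\lesssim$. The remaining factor $\bE\bigl[(\inf_{Y\in\ctiiY}\|u-Y\|_{\cY})^p\bigr]^{1/p}$ coincides, by a standard measurable selection argument, with the infimum over $Y\in L^p(\Omega;\ctiiY)$ of $\|u-Y\|_{L^p(\Omega;\cY)}$, giving the right-hand side of the claim.

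The main obstacle is really just the bookkeeping of the H\"older duality and the verification that every quantity involved is finite. The assumption $\bar p,\theta\ge 1$ guarantees the relevant norms are well defined; $\AM\in L^\theta(\Omega;\bR)$ is an explicit hypothesis; and the finiteness of $\bE\bigl[\inf_{Y\in\ctiiY}\|u-Y\|_{\cY}^p\bigr]^{1/p}$ follows from Theorem~\ref{thm:Lptheorem2}, since choosing $Y\equiv 0$ bounds the infimum pointwise by $\|u\|_{\cY}$, which lies in $L^p(\Omega;\cY)$ under the standing hypotheses. Once these technicalities are checked, the proof reduces to the two-line H\"older estimate sketched above.
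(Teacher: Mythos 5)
Your argument is essentially the paper's own proof: the same pathwise starting point \eqref{quasioptimalityEstimate_fd1}, the same replacement of $\rho$ by $\RM$ via \eqref{eq:assumption_ratio}, and H\"older's inequality with exactly the conjugate pair $r=\frac{\theta+p}{p}$, $r'=\frac{\theta+p}{\theta}$ (equivalently $\bar p r=\theta$, $\bar p r'=p$), with finiteness of the right-hand side supplied by $\AM\in L^{\theta}(\Omega;\bR)$ and Theorem~\ref{thm:Lptheorem2}. Your extra bookkeeping (the bound $\sqrt{1+\AM^2c_S^2}\le 1+\AM c_S$, Minkowski, and absorbing the additive constant into $\lesssim$) only makes explicit a looseness already implicit in the paper's statement, so the proposal is correct and follows the same route.
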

% % % % % % % % % % % % 
\begin{proof}
Theorem~\ref{thm:quasi_optimality_fd} ensures the validity of 
\eqref{quasioptimalityEstimate_fd1} for every given $\omega$. We proceed as in 
Theorem~\ref{thm:quasi_optimality_Lp_semi}, and estimate $\rho$ in terms of an 
absolute constant. H\"older's inequality with exponents 
$r=\frac{\theta+ p}{p}$, $r'=\frac{\theta+ p}{\theta}$, together with the 
assumption on $\AM$ and Theorem \ref{thm:Lptheorem2} ensure that the right-hand 
side is finite, thus proving the claim. 
\end{proof}
It is clear that, although in general $\bar{p} < p$, in the case 
of $\theta = \infty$ we have $\bar{p}=p$, a 
quasi-optimality constant uniformly bounded with respect to $\omega$, and 
quasi-optimal estimates in the same $L^p$ space where the 
discrete and continuous solutions live.

Results in the spirit of this section without a CLF condition for certain 
discretizations with 
piecewise linear and continuous functions can be found in \cite{Mo_Diss}.

%%%%%%%%%%%%%%%%%%%%%%%%%%%%%%%%%%%%%%%%%%%%%%%%%%%%%%%%%%%%%%%%%%%%%%%%%%%%%%%
%%%%%%%%%%%%%%%%%%%%%%%%%%%%%%%%%%%%%%%%%%%%%%%%%%%%%%%%%%%%%%%%%%%%%%%%%%%%%%%
%%%%%%%%%%%%%%%%%%%%%%%%%%%%%%%%%%%%%%%%%%%%%%%%%%%%%%%%%%%%%%%%%%%%%%%%%%%%%%%
%%
%%
%%
%%
%%%%%%%%%%%%%%%%%%%%%%%%%%%%%%%%%%%%%%%%%%%%%%%%%%%%%%%%%%%%%%%%%%%%%%%%%%%%%%%
%%%%%%%%%%%%%%%%%%%%%%%%%%%%%%%%%%%%%%%%%%%%%%%%%%%%%%%%%%%%%%%%%%%%%%%%%%%%%%%
%%%%%%%%%%%%%%%%%%%%%%%%%%%%%%%%%%%%%%%%%%%%%%%%%%%%%%%%%%%%%%%%%%%%%%%%%%%%%%%
\section{Numerical experiments}\label{sec:num_res}
We present some numerical experiments to give a preliminary 
indication of the sharpness of our estimates. We investigate the problem:
\begin{equation} \label{eq:randHeatStrong_numerics}
\begin{cases}
&\dot{u}(t,\omega) - a(\omega)\Delta u(t,\omega) = c_0(\omega)g(t),\\
&u(0,\omega) = 0,
\end{cases}
\end{equation}
where the coefficients $a(\omega)$, $c_0(\omega)$, and the function $g(t)$ 
are specified from case to 
case, the temporal domain is $[0,1]$, the spatial domain is $\Lambda 
= [0,1]^2$, and the boundary conditions are of homogeneous Dirichlet type.

\begin{figure}[h!]
        \centering
        \begin{subfigure}[b]{0.49\textwidth}
	  \includegraphics[width=\textwidth]{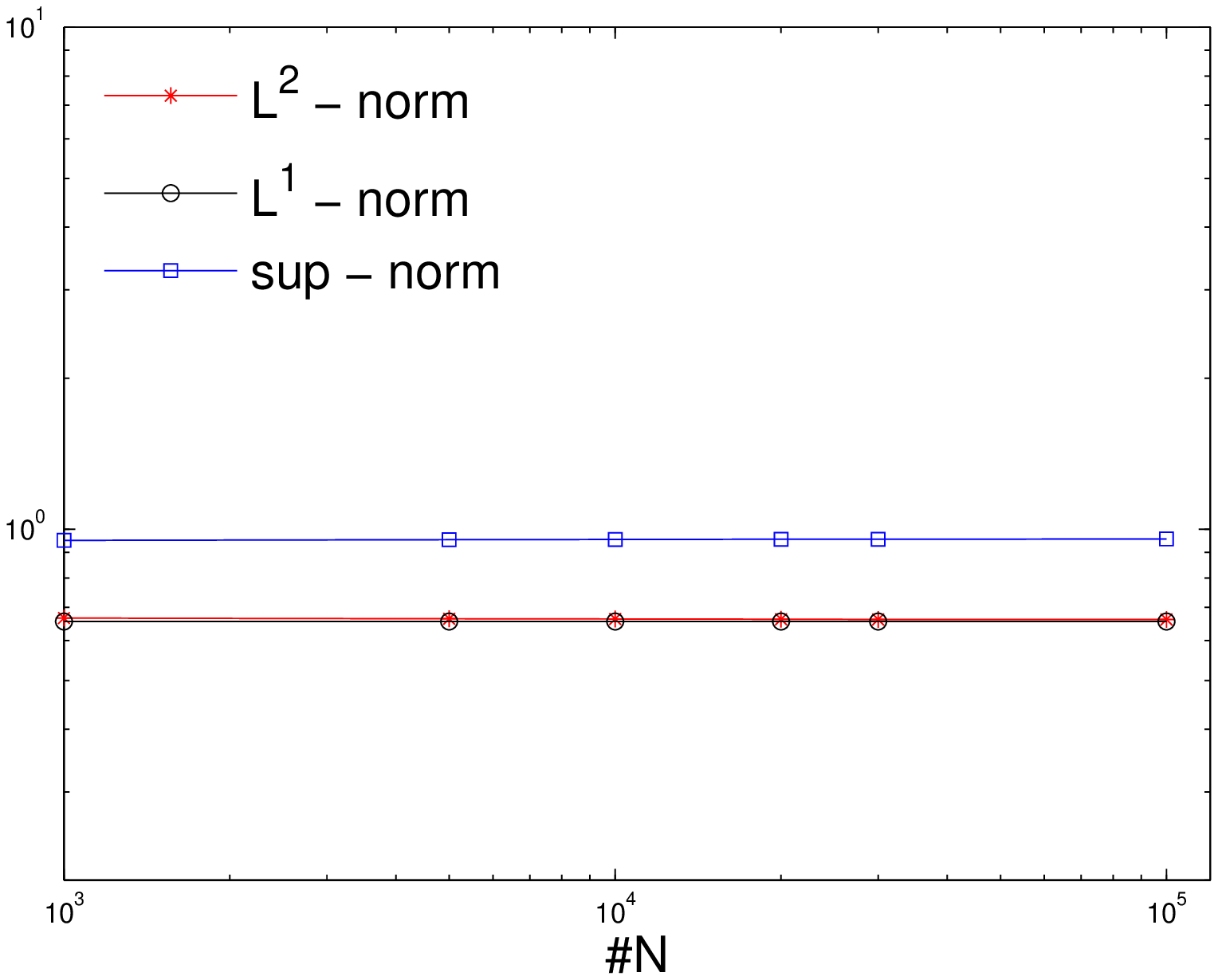}
	  \caption{Case \S~\ref{Exmpl1}}
	  \label{fig:case2}
        \end{subfigure}
        \begin{subfigure}[b]{0.49\textwidth}
	  \includegraphics[width=\textwidth]{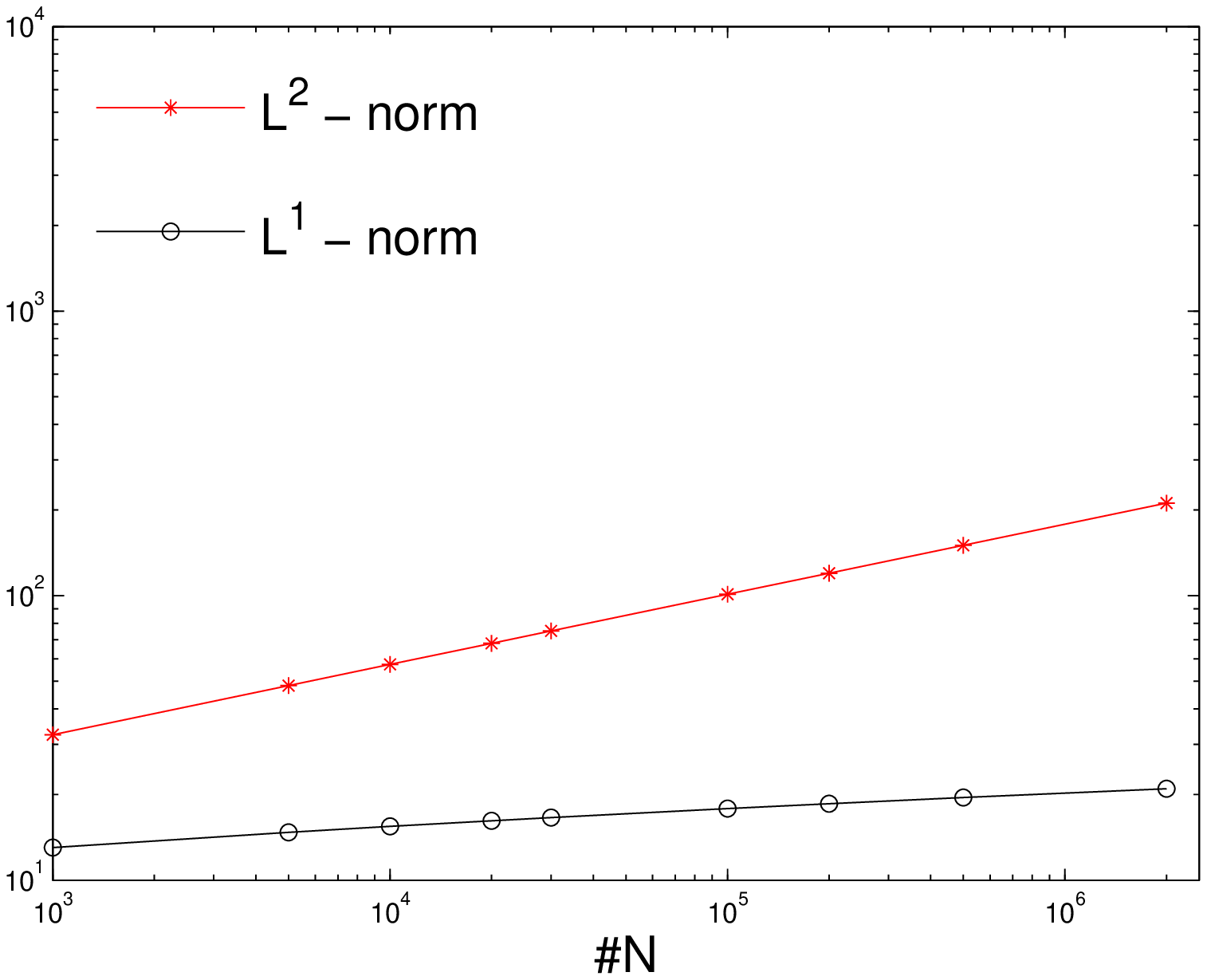}
	  \caption{Case \S~\ref{Exmpl2}}
	  \label{fig:case3}
        \end{subfigure}
        \begin{subfigure}[b]{0.49\textwidth}
	  \includegraphics[width=\textwidth]{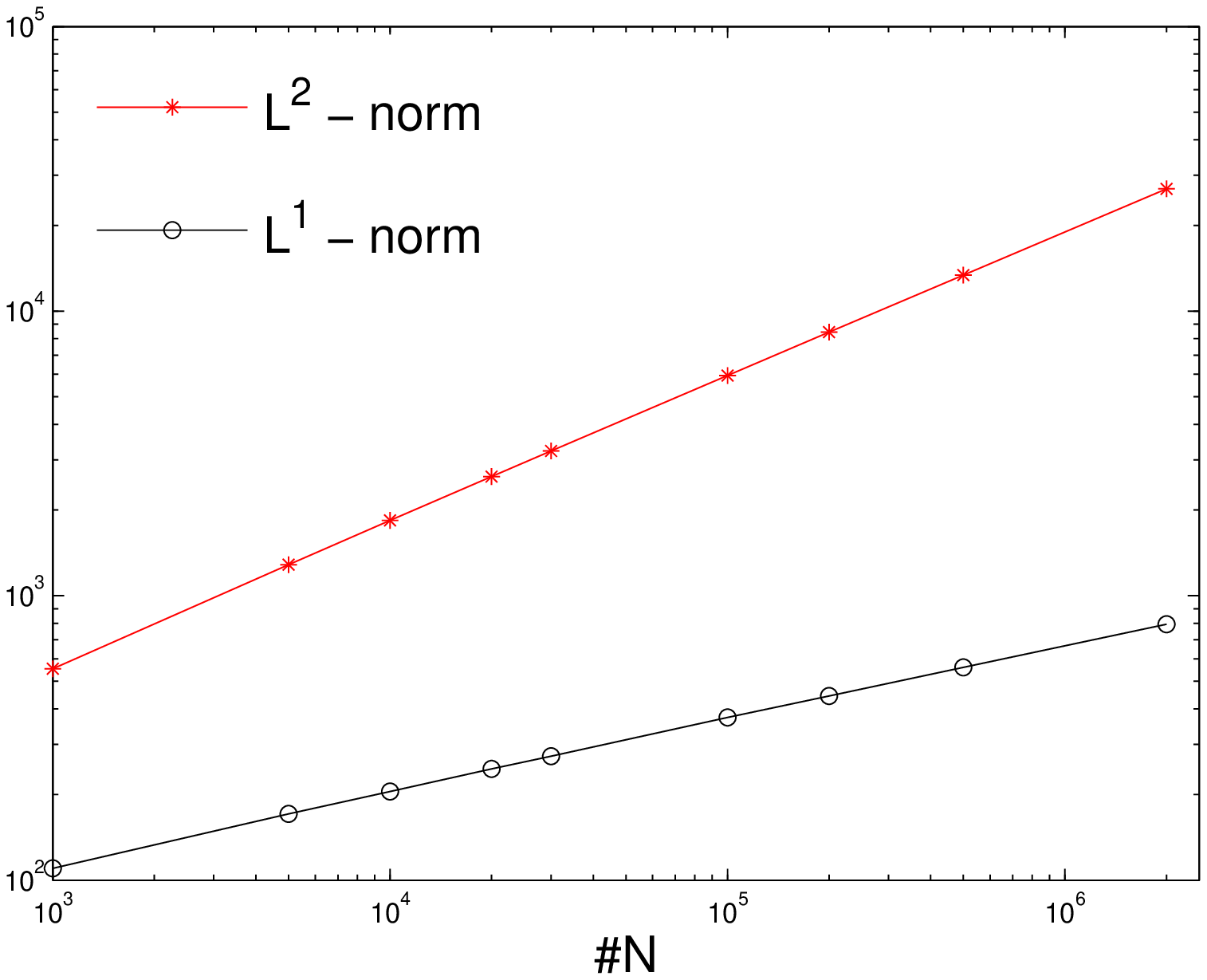}
	  \caption{Case \S~\ref{Exmpl3}}
	  \label{fig:case4.1}
        \end{subfigure}
        \begin{subfigure}[b]{0.49\textwidth}
	  \includegraphics[width=\textwidth]{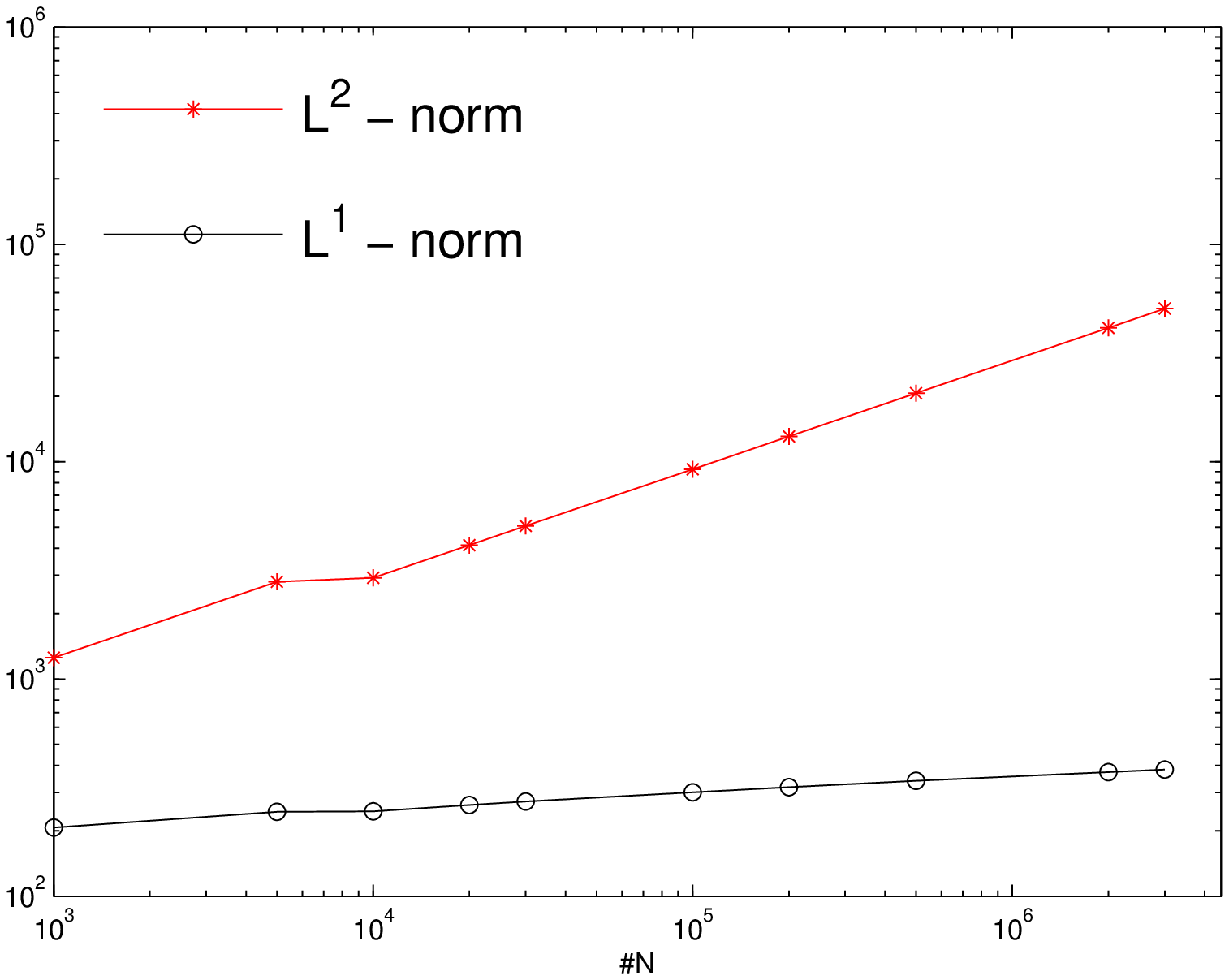}
	  \caption{Case \S~\ref{Exmpl4}}
	  \label{fig:case4.2}
        \end{subfigure}
\caption{Norm of the numerical solution}\label{fig:boundedness_of_norm}
\end{figure}

\subsection{Moments of the numerical solution}\label{subsec:numerics_moments}
We start by investigating the boundedness of the norm of the 
discrete solution obtained by means of \eqref{eq:discrete_form_q}, with $q=0$  
(which results in a time stepping which is the modification of the usual 
Crank-Nicolson scheme). The space $V_h$ is chosen to be a Lagrange 
finite element space of degree $1$, that is, $V_h$ is spanned by the usual 
``hat 
functions''.
We choose for simplicity $\Omega = [-0.5,0.5]$ and $\bP$ to be the 
Lebesgue measure and to simplify the computation, we treat $\omega$ 
as a parameter and $a(\omega)$ and $c_0(\omega)$ as deterministic functions of 
$\omega$, so that we 
can compute the $\norm{L^p(\Omega;\cdot)}{\cdot}$-norms by means of 
suitable quadrature rules rather than relying on expensive and slow Monte Carlo 
simulations. The function $g(t)$ is given by $\sin(\pi t)\sin(\pi \xi_1)\sin( 
\pi \xi_2)$, where 
$\xi$ denotes the spatial variable.
 The parameter $N$ against which we plot the 
norm of the discrete solution indicates the number of points used to 
approximate the integral with respect to $\omega$. The results are summarized 
in Figure~\ref{fig:boundedness_of_norm}.

\subsubsection{$\Am^{-1} \in L^{\infty}(\Omega) $, $\AM \notin 
L^{\infty}(\Omega) $ and $ f \in L^{\infty}(\Omega)$}\label{Exmpl1}
We fix $ a(\omega) = 1 + \frac{1}{\omega^2}$ and $c_0(\omega) = 
1+
\omega^3$. The operator presents now a non-integrable singularity which 
makes $\AM$ not uniformly bounded with respect to $\omega$. However, as 
expected from our theoretical results, we can see 
how the $L^p$-norm of the solution is finite, for different values of $p$, 
although $\AM \notin L^p$ for any $p$. This is visible in 
Figure~\ref{fig:case2}.

\subsubsection{$\Am^{-1} \notin L^{\infty}(\Omega) $, $\AM \in 
L^{\infty}(\Omega) $ and $ f \in L^{\infty}(\Omega)$}
\label{Exmpl2}
We fix now $ a(\omega) = \abs{\omega}^{\alpha}$ and $c_0(\omega) = 1 + 
\omega^3$. 
The operator is now not uniformly coercive with respect to $\omega$. We choose 
in particular $\alpha =0.99$, so that we expect the solution to have finite 
mean but infinite mean-square. We can appreciate the numerical results in 
Figure~\ref{fig:case3}, where we can see how the $L^2(\Omega;\cY)$-norm of the 
solution diverges as $N$ grows. On the other hand we can see how the 
$L^1(\Omega;\cY)$-norm seems to approach a finite value, although slowly, due 
both to the choice of $\alpha=0.99$, and to the quadrature rule used.

\subsubsection{$\Am^{-1} \notin L^{\infty}(\Omega) $, $\AM \in 
L^{\infty}(\Omega) $ and $ f \notin L^{\infty}(\Omega)$}
\label{Exmpl3}
We fix now $ a(\omega) = {\abs{\omega}^{\alpha}}$ and $c_0(\omega) = 
\frac{1}{\abs{\omega}^{\beta}}$.
The operator is now not uniformly coercive with respect to $\omega$ and the 
right-hand side is not uniformly bounded. We choose 
 $\alpha =0.99$ and $\beta =0.5$ and, as expected, the solution does not have 
finite 
mean nor finite mean-square. This is visible in Figure~\ref{fig:case4.1}.

\subsubsection{$\Am^{-1} \notin L^{\infty}(\Omega) $, $\AM \in 
L^{\infty}(\Omega) $ and $ f \notin L^{\infty}(\Omega)$, different 
singularities}
\label{Exmpl4}
By changing the location of the 
singularity of $f$ in \S~\ref{Exmpl3} , for example by having $c_0(\omega) = 
\frac{1}{\abs{\omega-0.4}^{\beta}}$, we obtain a solution which still has 
finite 
mean, although the hypotheses of Theorem~\ref{thm:Lptheorem2} are not 
fulfilled (this fact reflects what is pointed out in 
Example~\ref{example:factorization_op}). This is visible in 
Figure~\ref{fig:case4.2}, 
where we can see how the $L^1(\Omega;\cY)$-norm appears to slowly converge to a 
finite value, while the $L^2(\Omega;\cY)$-norm diverges.

\subsection{Convergence in $L^p$}\label{subsec:numerics_Lp-convergence}
In this section we show how the error converges not only $\omega$-wise, but 
also in an $L^p$-sense, whenever the solution is bounded with respect to the 
same $L^p$-norm. We restrict 
ourselves to the case of finite moments and present the mean error
of the Petrov-Galerkin discretization.

\begin{figure}[h!]
        \centering
        \begin{subfigure}[b]{0.49\textwidth}
	  \includegraphics[width=\textwidth]{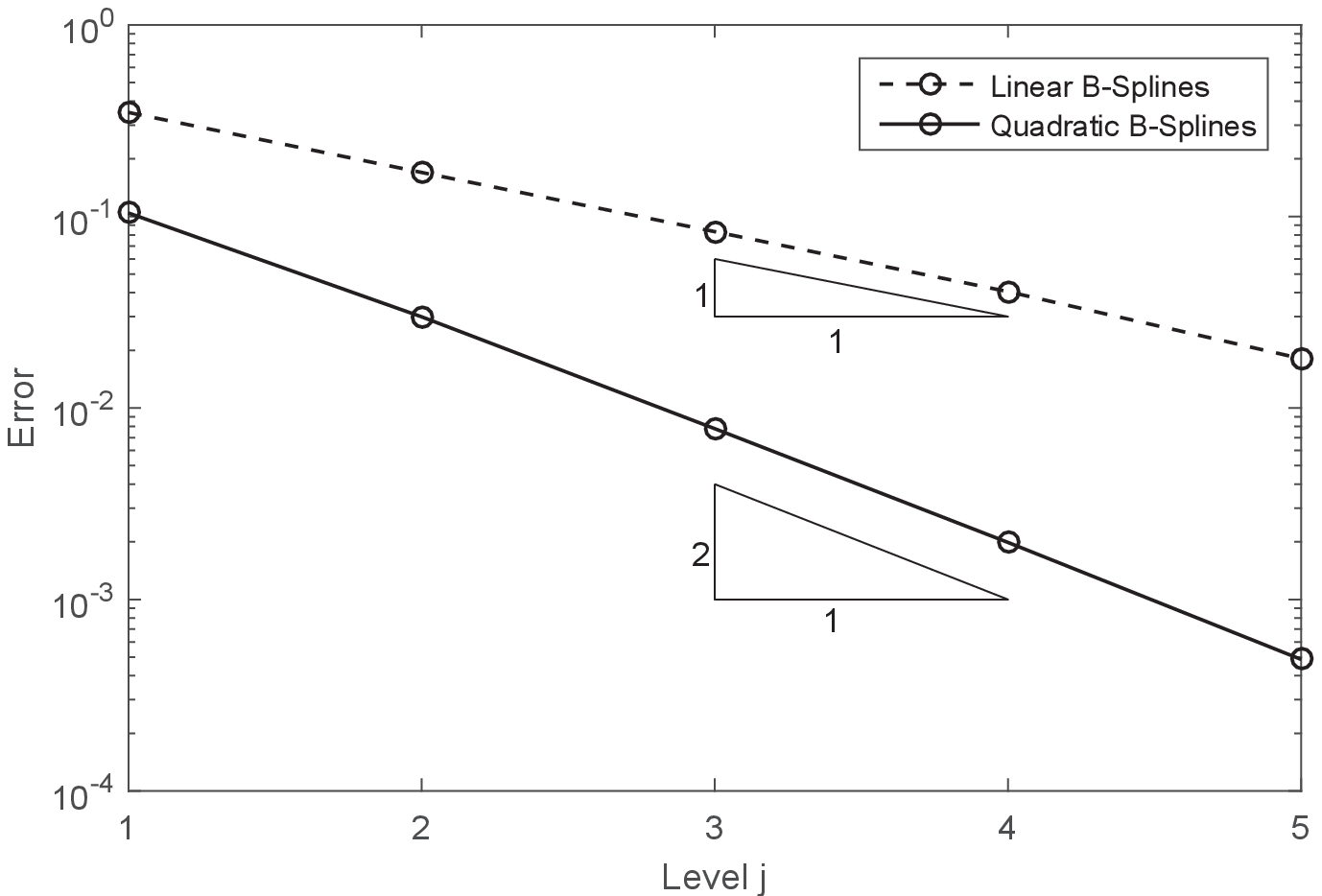}
	  \caption{ $a(\omega):=\abs{\omega}^{0.99}$ 
	  and $c_0:= {\abs{\omega-0.4}}^{-\frac12}$,\\ linear and 
quadratic B-splines in space }
	  \label{fig:Exmpl6_discr}
        \end{subfigure}
                \begin{subfigure}[b]{0.49\textwidth}
	  \includegraphics[width=\textwidth]{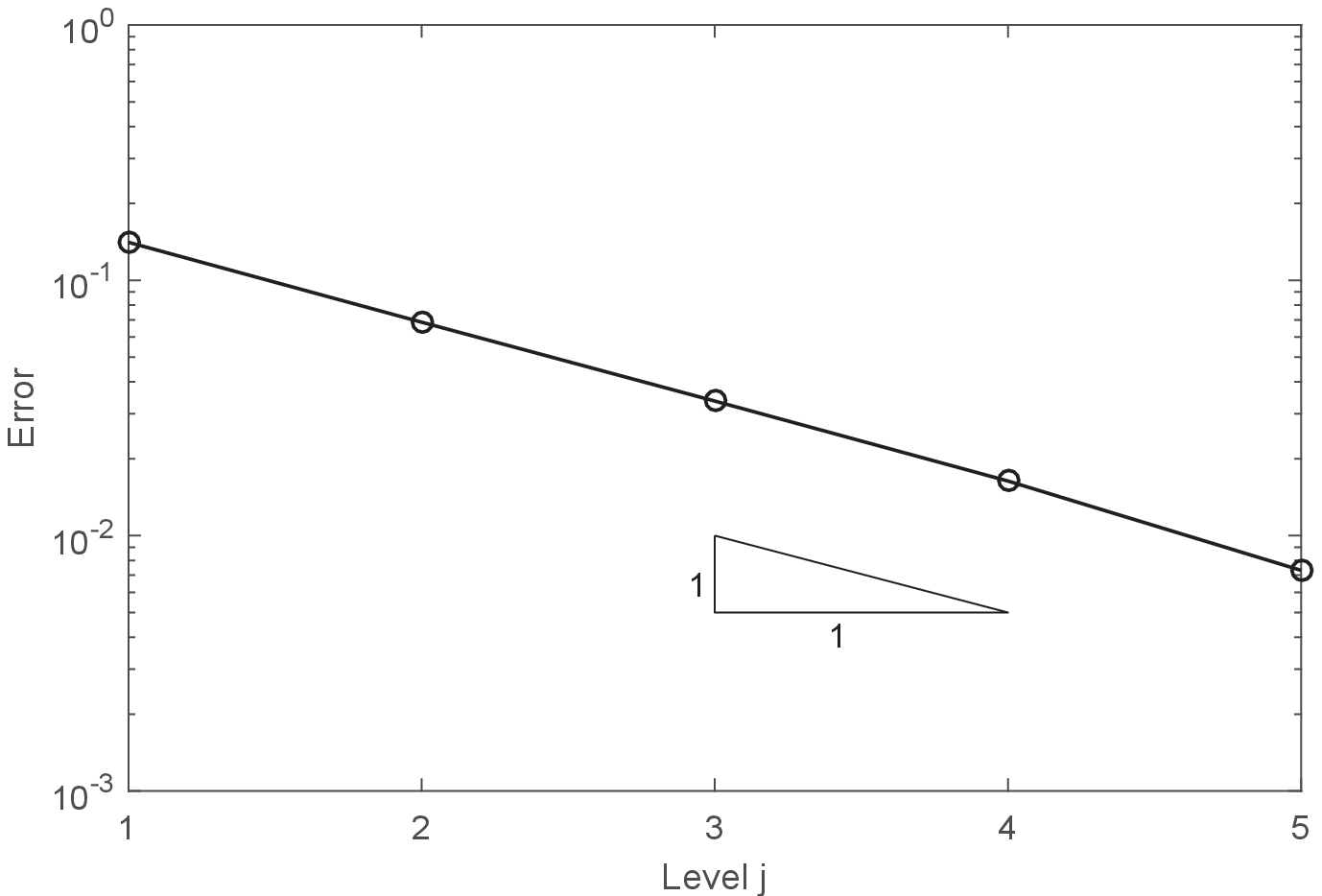}
	  \caption{ $a \sim 
\mathcal{LN}(0,1)$ and $c_0\equiv 1$,\\ linear B-splines in space }
 	  \label{fig:Exmpl5_discr}
        \end{subfigure}
\caption{Convergence rate for $\mathbb{E} 
\|u-U\|_{\cY}$}\label{fig:convergence}
\end{figure}
To this end we consider \eqref{eq:randHeatStrong_numerics} again on 
temporal domain
$[0,1]$ and spatial domain $\Lambda:=[0,1]$ and show the optimal convergence 
for the counterpart of \S~\ref{Exmpl4}, which is the sharpest example. 
This is visible in Figure~\ref{fig:Exmpl6_discr}.

For the discretization we choose $S_{k,1}$ and $Q_{k,0}$ as discrete temporal 
trial and test spaces on a uniform grid
and $V_h$ as the space of continuous and piecewise linear functions on a 
uniform spatial grid of grid size $h$. In Figure \ref{fig:Exmpl6_discr}
we also consider $V_h$ spanned by quadratic B-splines, i.e.,
piecewise quadratic and globally differentiable functions.
In view of the CFL-condition we set $h(j):=2^{-j}$ and $k(j):=2^{-2j}$.
We can see that the error converges with optimal order
with respect to the $L^1(\Omega;\mathcal{Y})$-norm, i.e., the mean of the 
errors in energy norm. 

In addition to the examples above, we consider an example with non-uniformly 
distributed coefficients.
To this end, we set $a \sim \mathcal{LN}(0,1)$ log-normally distributed and 
$c_0\equiv 1$.
A log-normally distributed $a(\omega)$ is neither uniformly bounded nor 
uniformly coercive, but it is
not hard to see that the moments of $a(\omega)$ and $\frac{1}{a(\omega)}$ 
increase but are finite for 
arbitrary finite order of moments. The results are illustrated in Figure 
\ref{fig:Exmpl5_discr}. As expected we can see
that we achieve the optimal order also in this case.

\section{Final remarks}
In this paper we developed a quasi-optimality theory for the solution of 
parabolic problems with random coefficients, under relatively weak 
assumptions of non-uniform bounds on the operator $A$ that defines the 
equation. All the bounds that we obtain are explicit, and their 
$\omega$-dependence is sharply tracked. Despite the fact that we do not have 
theoretical proof of their sharpness, numerical experiments suggest that the 
bounds for the norm of the continuous, discrete and semidiscrete solution are 
as 
sharp as possible. Furthermore, the results of quasi-optimality for spatial 
discretizations of the problem are obtained in terms of an absolute constant, 
which depends on $\omega$ only through $\rho = \frac{\AM}{\Am}$, while for full 
discretizations the quasi-optimality constant contains an additional factor 
proportional to $\AM$. Although 
this extra factor seems unavoidable, if we follow our approach, we cannot claim 
that what we achieve is the best possible bound. In particular, a 
combination of our results for spatial semidiscretization together with a 
suitable time-stepping, might lead to optimal results of convergence in 
$L^p(\Omega;\cdot)$, and this kind of analysis could be the next step for 
future research in this field.

\bibliographystyle{alpha}
\bibliography{biblionew}

\end{document}